\newcommand{\customlabel}[2]{%
\protected@write \@auxout {}{\string \newlabel {#1}{{#2}{}}}}
\newcommand{\R}{\mathbb{R}}
\newcommand{\N}{\mathbb{N}}
\newcommand{\Prob}[1]{\mathcal{P}(#1)}
\newcommand{\C}{\mathcal{C}}
\newcommand{\Meas}[1]{\mathcal{M}(#1)}
\newcommand{\id}{\mathrm{Id}}
\newcommand{\Proj}[1]{{\mathrm{Proj}_{#1}}}
\newcommand{\xbar}{\overline{x}}
\newcommand{\ybar}{\overline{y}}
\newcommand{\zbar}{\overline{z}}
\newcommand{\ubar}{\overline{u}}
\newcommand{\vbar}{\overline{v}}
\newcommand{\spt}[1]{\mathrm{spt}(#1)}
\newtheorem{Thm}{Theorem}[section]
\newtheorem*{Thm*}{Theorem}
\newtheorem{Lem}[Thm]{Lemma}
\newtheorem{Prop}[Thm]{Proposition}
\newtheorem{Cor}[Thm]{Corollary}
\theoremstyle{definition}
\newtheorem{Def}[Thm]{Definition}
\newtheorem{Prbm}{Problem}
\newtheorem*{Prbm*}{Problem}
\newtheorem{Ex}{Example}
\theoremstyle{remark}
\newtheorem{Rmk}[Thm]{Remark}
\newcommand\numberthis{\addtocounter{equation}{1}\tag{\theequation}}
\title{Quadratic optimal transportation problem with a positive semi definite structure on the cost function}
\author{Seonghyeon Jeong}
\date{}
\begin{document}
\maketitle

\section{Introduction}\label{sec: intro}

\subsection{Introduction}
Optimal Transportation problem (OT) has been an active research area in last decades, and many questions in the field are studied. Furthermore as OT gives a way to metrize the probability spaces, OT has a wide range of applications. While OT was used in wide application area, there were some application situations where a few aspects of OT needed to be modified to fit in the situation. As such, there has been a lot of versions of OT such as multi-marginal OT (\cite{pass2015multi}, \cite{kim2015multi}, \cite{pass2022monge}, \cite{Gero2019simple}), optimal partial transportation problem(\cite{Alessio2010partial}, \cite{Kitagawa2015multi}, \cite{Davila2016dynamics}), congested transportation problem(\cite{Carlier2008congestion}), martingale transportation problem(\cite{Beiglbock2016martingale}, \cite{Ghoussoub2019martingale}, \cite{beiglbock2017martingale}), co-OT(\cite{redko2020coot}), etc. 
In this paper, we introduce another version of OT.
\begin{Prbm*}[0-1]\customlabel{Prbm 0-1}{(0-1)}
\begin{equation}\label{eqn: form}
\inf_{T\sharp \mu = \nu} \iint c(x, T(x), y, T(y) ) d\mu d\mu.
\end{equation}
\end{Prbm*}
Problem \ref{Prbm 0-1} can be regarded as a quadratic version of OT, written in Monge's formulation. The problem that corresponds Katorovich's relaxed formation of OT is the following.
\begin{Prbm*}[0-2]\customlabel{Prbm 0-2}{(0-2)}
\begin{equation}
\inf_{\pi \in \Pi(\mu,\nu)} \iint c d\pi d\pi.
\end{equation}
\end{Prbm*}
Some readers may familiar with this problem already due to the studies about Gromov-Wasserstein distance.
\begin{equation*}
\inf_{\pi \in \Pi(\mu,\nu)} \iint \left| \| x-\xbar \|^2 - \| y- \ybar \|^2 \right|^2 d\pi d\pi.
\end{equation*}
This type of quantity can also be found in \cite{sturm2020space}, where the author defines $L_p$ distortion distance between two metric measure spaces in the form of \eqref{eqn: form} and construct the space of metric measure spaces. We would like to comment that there is another result \cite{cabrera2022optimal} by R. Cabrera about where the author studies Problem \ref{Prbm 0-1} with dynamic setting. 

The motivation of Problem \ref{Prbm 0-1} in this paper, however, arises from a data dimension reduction algorithms such as t-SNE \cite{JMLR:v9:vandermaaten08a} and UMAP \cite{UMAP}. In data science, it is an important problem to design an algorithm which let us save the data points from very high dimension in low dimension such as $\R^2$ or $\R^3$ preserving some structures of the point clouds. For instance, in t-SNE algorithm which is designed in \cite{JMLR:v9:vandermaaten08a}, the authors define affinities $p_{ij}$ and $q_{ij}$ which quantifies similarity of two points $x_i$ and $x_j$ in high dimension and $y_i$ and $y_j$ in low dimension respectively, and use KL-divergence $\sum p \log \frac{p}{q}$ to measure the difference between $p_{ij}$ and $q_{ij}$. Then t-SNE algorithm seeks a set of points in the low dimension which minimizes the KL-divergence. Existence of such set of points $\{ y_i \}$ in the low dimension was studied in \cite{auffinger2023equilibrium}, and \cite{jeong2024convergence}. Then, letting $T$ be a function such that $T(x_i) = y_i$, we see that $T$ solves Problem \ref{Prbm 0-1} with $c = p\log\frac{p}{q}$. Hence, to study the structure of the projection $T$, it is necessary to study Problem \ref{Prbm 0-1}.

The goal of this paper is to introduce the quadratic optimal transportation problem . We explore some basic properties of the quadratic optimal transportation problem analogous to OT, and discuss extra structures of the quadratic optimal transportation problem. The main theorem of this paper is the duality for the quadratic transportation problem with a structural assumption on the cost function. 
\begin{Thm*}[Duality, rough statement]
For squared cost function $c$, we have the following identity
\begin{equation*}
\inf_{\pi} \sqrt{\iint c d\pi d\pi} = \sup_{(f,\phi,\psi)} \left(\int \phi d\mu + \int \psi d\nu \right)
\end{equation*}
where the infimum and supremum are taken over some admissible sets.
\end{Thm*}
The squared cost function is defined in Section \ref{sec: prop} and the admissible sets are described in Section \ref{sec: qOT} and \ref{sec: dual}.

This paper consists as follows. In section \ref{sec: qOT}, we introduce the quadratic optimal transportation problem and compare the problem with other versions of OT. In section \ref{sec: prop}, we explore some basic properties of the quadratic optimal transportation problem. In \ref{sec: dual}, we provide a duality formula for a certain type of cost function. In the appendices, provide some background theory for the classical optimal transportation problem which are used in this paper, we give a proof of Lemma \ref{Lem: OT ex lem}, and we formally derive a related partial differential equation.

\subsection{Notations}
We introduce some notations which are used in this paper here. $X$ and $Y$ are compact Polish spaces, and $Z = X \times Y$. We will use $x$ and $\xbar$ ($y$ and $\ybar$) to denote the points in $X$ (in $Y$). We also use $z = (x,y)$ and $\zbar = (\xbar,\ybar)$ to denote the points in $Z$. $\Meas{X}, \Meas{Y}, \Meas{Z}$ are the sets of Borel measures on the respective spaces, and $\Prob{X}, \Prob{Y}, \Prob{Z}$ are the sets of probability measures on the respective spaces. $\mu$ and $\nu$ will denote probability measures on $X$ and $Y$ respectively. $B_r^d(p)$ will denote the balls of radius $r$ centered at $p$ in $\R^d$. When we use a ball in a space $X$ that is not a Euclidean space, we use $B^X_r(p)$ to denote the ball. $c$ will be the cost function, which often has domain $Z \times Z$, but the domain of $c$ can be changed accordingly. $\Proj{\cdot}$ is the canonical projection to the space indicated in the subscription.

\section{Quadratic transportation problem}\label{sec: qOT}

\subsection{Quadratic transportation problem}
We restate the problem that we consider in this paper in detail. Let $c: Z\times Z \to \R$ be a lower semi-continuous function. Then the problem states as follows. 

\begin{Prbm} \label{prbm: qOT monge}
Find a measurable function $T : X \to Y$ which achieves the following infimum
\begin{equation*}
\inf \iint c(x,T(x), \xbar, T(\xbar))d\mu(x)d\mu(\xbar)
\end{equation*} 
among the measurable functions that satisfies $T_\sharp \mu = \nu$, where $T_\sharp \mu$ is the push-forward measure.
\end{Prbm}
We call the above problem \emph{Monge formulation of the quadratic optimal transportation problem}, or simply \emph{Monge QOT}. We call a solution to Monge QOT a \emph{Monge solution}. Monge QOT is highly non-linear with respect to $T$. However, Using Kantorovich's relaxation as in the classical optimal transportation theory, we can reduce the non-linearity to the quadratic dependency. We first define
\begin{equation*}
\Pi(\mu,\nu) = \{ \pi \in \Prob{Z}| \Proj{X}_\sharp \pi = \mu, \Proj{Y}_\sharp \pi = \nu \}.
\end{equation*}
Then the \emph{Kantorovich formulation of the quadratic optimal transportation problem (Kantorovich QOT)} is the following.

\begin{Prbm}\label{prbm: qOT}
Find $\pi \in \Pi(\mu,\nu)$ that achieves the following infimum.
\begin{equation*}
\inf_{\pi \in \Pi(\mu,\nu)} \iint c(x,y,\xbar,\ybar) d\pi(x,y)d\pi(\xbar,\ybar). 
\end{equation*}
\end{Prbm}
We call a solution to the Kantorovich QOT a \emph{Kantorovich solution}. We will often use quadratic optimal transportation problem or QOT to refer one of Monge QOT or Kantorovich QOT, and we will often say solution when there is no need for distinguishing the Monge solutions and the Kantorovich solutions.

The Kantorovich formulation of the classical optimal transportation problem allow us to view the problem in the perspective of linear programming. In the quadratic optimal transportation case, the relaxed problem \ref{prbm: qOT} can be considered as a quadratic programming. 

\subsection{Comparison with other variations of OT}
In this subsection, we claim that the solutions of the quadratic optimal transportation are different from the solutions to other versions of the optimal transportation problem with naive modifications.

\subsubsection{Classical optimal transportation problems}
We start with the classical optimal transportation problems. In this problem, we consider a cost function $c:X \times Y \to \R$ and measures $\pi$ which have given marginals $\mu \in \Prob{X}$ and $\nu \in \Prob{Y}$. We define total cost $\C : \Prob{X \times Y} \to \R \cup \{ \pm \infty \} $ by $\C(\pi) = \int c d \pi$. Then we seek a probability measure in $\Pi(\mu,\nu)$ which minimize the total cost.

\begin{Prbm}\label{prbm: OT}
\begin{equation*}
\inf_{\pi \in \Pi(\mu,\nu)} \C(\pi) = \inf_{\pi \in \Pi(\mu,\nu)} \int c d\pi.
\end{equation*}
\end{Prbm}

In the quadratic optimal transportation problem, we consider probability measures with the same marginal conditions in $\Pi(\mu,\nu)$, but we integrate the cost function against the measure twice. Hence, one may consider Problem \ref{prbm: OT} with $\pi \otimes \pi$ as a measure in $\Prob{X^2 \times Y^2}$ which is in $\Pi(\mu \otimes \mu, \nu \otimes \nu)$. 

\begin{Prbm*}{\textbf{3-1}}\customlabel{prbm: OT2}{3-1}
\begin{equation*}
\inf_{\pi' \in \Pi(\mu\otimes\mu, \nu\otimes\nu)} \C(\pi') = \inf_{\pi' \in \Pi(\mu\otimes\mu, \nu\otimes\nu)} \int c d\pi'.
\end{equation*}
\end{Prbm*}

We claim that the solution to this problem is different from the solutions to the QOT in general with an example. Let us describe the idea before we give an example. The idea is to see the structure of the solutions to the two problems. If QOT has a Monge solution $T$, then $\pi = (Id \times T)_\sharp \mu$ and $\pi \otimes \pi = (Id\times T \times Id \times T)_\sharp \mu$. In particular, if $\pi\otimes \pi$ is a Kantorovich solution of Problem \ref{prbm: OT2}, then $(T,T)$ will be the Monge solution to the Problem \ref{prbm: OT2}. Then the horizontal and vertical slices of $X\times X$ will be mapped to the horizontal and vertical slices of $Y \times Y$ respectively. This is not true for the solutions to the classical optimal transportation problem in general. 

\begin{Ex}\label{Ex: OT}
Let $X = Y = [0,1]$ and $c(x,y,\xbar,\ybar) = -(x+\xbar)(y+\ybar)$. We let $\mu$ and $\nu$ be the probability measures on $[0,1]$ defined by
\begin{align*}
\mu &= \frac{2}{3} dx\lfloor_{[0,\frac{1}{2}]} + \frac{4}{3} dx\lfloor_{[\frac{1}{2},1]}, \\
\nu &= dy\lfloor_{[0,1]}.
\end{align*}
Let $\pi'$ be a solution to the Problem \ref{prbm: OT2} with $c, \mu, \nu$ just defined. 

\begin{Lem}\label{Lem: OT ex lem}
Define $\overline{D}_k$ and $\underline{D}_k$ by 
\begin{align*}
\overline{D}_k & \{ (x, \xbar) | x + \xbar \leq k \}, \\
\underline{D}_k & \{ (x, \xbar) | x + \xbar \geq k \}.
\end{align*}
Let $(x_0, \xbar_0 , y_0, \ybar_0) \in \spt{\pi'}$ and $k = x_0 + \xbar_0$ and $l = y_0 + \ybar_0$. Then
\begin{align*}
\int_{\overline{D}_k} d\mu \otimes \mu = \int_{\overline{D}_l} d\nu \otimes \nu, \\
\int_{\underline{D}_k} d\mu \otimes \mu = \int_{\underline{D}_l} d\nu \otimes \nu.
\end{align*}
\end{Lem}
We postpone the proof of Lemma \ref{Lem: OT ex lem} to the Appendix \ref{sec: proof lem}.

\begin{Cor}\label{Cor: OT no QOT}
\begin{equation*}
\inf_{\gamma \in \Pi(\mu\otimes \mu, \nu \otimes \nu)} \int c d\gamma < \inf_{\pi \in \Pi(\mu,\nu)} \iint c d\pi d\pi.
\end{equation*}
\end{Cor}
\begin{proof}
we first note that 
\begin{equation*}
\inf_{\gamma \in \Pi(\mu\otimes \mu, \nu \otimes \nu)} \int c d\gamma \leq \inf_{\pi \in \Pi(\mu,\nu)} \iint c d\pi d\pi
\end{equation*}
as $\pi \otimes \pi \in \Pi(\mu \otimes \mu, \nu \otimes \nu)$ for any $\pi \in \Pi(\mu,\nu)$. Therefore, we only need to show that the equality is not attained. Suppose we have the equality
\begin{equation}\label{eqn: OT=QOT}
\inf_{\gamma \in \Pi(\mu\otimes \mu, \nu \otimes \nu)} \int c d\gamma = \inf_{\pi \in \Pi(\mu,\nu)} \iint c d\pi d\pi.
\end{equation}
Note that by existence of the solution of QOT (Proposition \ref{prop: exist opt plan}), $\exists \pi^* \in \Pi(\mu,\nu)$ which is attains the infimum. Then $\pi^*\otimes \pi^*$ is also a solution to Problem \ref{prbm: OT2}. Let $(x,y) \in \spt{\pi^*}$, then $(x,y,x,y) \in \spt{ \pi^* \otimes \pi^*}$ and by Lemma \ref{Lem: OT ex lem}, we observe
\begin{align*}
\int_{\overline{D}_{2x}} d\mu \otimes \mu = \int_{\overline{D}_{2y}} d \nu \otimes \nu, \\
\int_{\underline{D}_{2x}} d\mu \otimes \mu = \int_{\underline{D}_{2y}} d \nu \otimes \nu.
\end{align*}
Explicit computation with the definitions of $\mu$ and $\nu$, one can observe that $y$ is given by a function of $x$. In particular, if $y= T(x)$, one can compute
\begin{equation*}
T(x) = \frac{\sqrt2}{3}x \textrm{ for } 0 \leq x \leq \frac{1}{2}.
\end{equation*}
On the other hand, we also have $\nu = T_\sharp \mu$ as $\spt{\pi^*}$ is on the graph of $T$. Then we can also compute
\begin{equation*}
dy\lfloor_{[0,\frac{\sqrt2}{6}]}=\nu \lfloor_{[0,\frac{\sqrt2}{6}]} = T_\sharp \mu\lfloor_{[0,\frac{1}{2}]} = \left( x \mapsto \frac{\sqrt2}{3}x \right)_\sharp \frac{2}{3} dx = \frac{1}{\sqrt2} dy,
\end{equation*}
which is a contradiction. Therefore the equality \eqref{eqn: OT=QOT} cannot be attained.
\end{proof}

Corollary \ref{Cor: OT no QOT} shows that the solution to the classical optimal transportation problem \ref{prbm: OT} cannot be the solution to the QOT of this example.
\end{Ex}

\begin{Rmk}
By solving the formula of the cost function in Example \ref{Ex: OT}, one can find that the quadratic optimal transportation problem in Example \ref{Ex: OT} is in fact equivalent to a classical optimal transportation problem with $c(x,y) = -xy$. 
\begin{equation*}
\inf_{\Pi(\mu,\nu)}\iint -(x+ \xbar)(y+ \ybar)d\pi d\pi = 2\inf_{\Pi(\mu,\nu)} \left( \int - xy d\pi  - \int x d\mu \int y d\nu \right).
\end{equation*}
This shows that for some special cases, the quadratic optimal transportation problem can still be regarded as a classical optimal transportation problem, but with appropriate adjustments.
\end{Rmk}

\subsubsection{Multi-marginal problems}
In multi-marginal optimal transport problems, we consider probability measures $\mu_i \in \Prob{X_i}$, where $X_i$ are Polish spaces, and a cost function $c: \prod_{i=1}^n X_i \to \R$, and we seek a probability measure $\gamma$ that minimizes the total cost $\int c d\gamma$ among all the probability measures $\gamma$ that satisfy the marginal conditions $\Proj{X_i}_\sharp \gamma = \mu_i $. 
\begin{Prbm}\label{prbm: mOT}
\begin{equation*}
\inf_{\gamma \in \Pi(\mu_i | 1 \leq i \leq n)} \int c d \gamma
\end{equation*}
where 
\begin{equation*}
\Pi(\mu_i | 1 \leq i \leq n) = \left\{ \gamma \in \Prob{\prod_{i=1}^n X_i} | \Proj{X_i}_\sharp \gamma = \mu_i\right\}.
\end{equation*}
\end{Prbm}
Then one can consider the multi-marginal problem with $n=4$, $X_1 = X_3= X$, $X_2=X_4 = Y$, $\mu_1 = \mu_3 = \mu$ and $\mu_2=\mu_4 = \nu$.
\begin{Prbm*}{\textbf{4-1}}\customlabel{prbm: MOT2}{4-1}
\begin{equation*}
\inf_{\gamma \in \Pi(\mu,\mu,\nu,\nu)} \int c d \gamma,
\end{equation*}
where 
\begin{equation*}
\Pi(\mu,\mu,\nu,\nu) = \left\{ \gamma \in \Prob{\prod_{i=1}^n X_i} \bigg| \begin{matrix}\Proj{X_1}_\sharp \gamma = \Proj{X_3}_\sharp \gamma = \mu, \\ 
\Proj{X_2}_\sharp \gamma = \Proj{X_4}_\sharp \gamma = \nu \  \end{matrix} \right\}.
\end{equation*}
\end{Prbm*}
 We claim that this modification of multi-marginal problem does not give a solution to QOT in general. To see this, we consider the dimension of the supports of the solutions. In \cite{pass2015multi},\cite{kim2015multi}, it is shown that under suitable condition, the solution of Problem \ref{prbm: mOT} is given by a function, i.e. there exists $F_i : X_1 \to X_i$, $2 \leq i \leq n$ such that 
\begin{equation*}
(Id \times F_2 \times \cdots \times F_n)_\sharp \mu_1
\end{equation*}
is a solution to Problem \ref{prbm: mOT}. Then if we consider $X_i = X$ for all $i$ where $\dim X = d$, we can observe that the dimension of the support of the solution will be $d$ as it lies on the graph of a function whose domain has dimension $d$. On the other hand, if a solution $\pi$ of QOT is given by a Monge solution $T$, then the support of the measure $\pi\otimes \pi$ lies in the graph of the function $(T,T)$ which has $2d$-dimensional domain. 

\begin{Ex}
Let $X_i=[0,1]$ for $1\leq i \leq 4$ and we consider 
\begin{equation*}
c(x_1,x_2,x_3,x_4) = \sum_{i < j} \frac{1}{2} \| x_i - x_j \|^2.
\end{equation*}
Let $\mu_i = dx\lfloor_{[0,1]}$ for $1 \leq i \leq 4$. Then it is obvious that the infimum value of Problem \ref{prbm: mOT} is $0$ (Consider $\gamma = (Id \times Id \times Id \times Id)_\sharp dx\lfloor_{[0,1]}$). On the other hand, QOT with $X = Y = [0,1]$, $\mu=\nu = dx\lfloor_{[0,1]}$ and $c(x,y,\xbar,\ybar)$ where $c$ is defined above does not have the infimum value 0. 
\begin{align*}
&\iint c d \pi d\pi \\
=& \iint \frac{1}{2} \left( | x- y |^2 + | x- \xbar |^2 + | x- \ybar|^2 + | y - \xbar|^2 + | y- \ybar|^2 + | \xbar - \ybar |^2 \right) d\pi d\pi \\
\geq &\iint \frac{1}{2} \left( | x - \xbar |^2 + | y - \ybar |^2 \right) d \pi d\pi \\
=& \int_0^1 \int_0^1 (x-\xbar)^2 dxd\xbar >0.
\end{align*}
Therefore, solutions of multi-marginal problem does not directly implies solutions of QOT as
\begin{equation*}
\inf_{\gamma \in \Pi(\mu,\nu,\mu,\nu)} \int c d\gamma < \inf_{\pi \in \Pi(\mu,\nu)} \iint c d\pi d\pi.
\end{equation*}
\end{Ex}

\subsubsection{Co-optimal transportation problems}
Co-optimal transportation problem was introduced in \cite{redko2020coot}. 
\begin{Prbm}\label{prbm: coOT}
\begin{equation*}
\inf_{\pi \in \Pi(\mu_1,\nu_1), \rho \in \Pi(\mu_2,\nu_2)} \iint c d\pi d\rho.
\end{equation*}
\end{Prbm}
This problem have the most similar look with Problem \ref{prbm: qOT} as we get Problem \ref{prbm: qOT} by setting $\pi = \rho$ in Problem \ref{prbm: coOT}. Still, if we consider the Problem \ref{prbm: coOT} with $\mu_1 = \mu_2 = \mu$ and $\nu_1 = \nu_2 = \nu$, Solutions to Problem \ref{prbm: coOT} does not directly give solutions of QOT. Here, the idea of the example is to look at the difference of the supports of $\pi \otimes \rho$ and $\pi \otimes \pi$. Support of $\pi \otimes \pi$ always contains some part of the diagonal of $(X\times Y)^2$, while $\pi \otimes \rho$ does not have to contain any part of the diagonal of $(X \times Y)^2$. Hence, we can build an example by letting the cost function to have very high value on the diagonal of $(X\times Y)^2$. 

\begin{Ex}
We give a discrete example. We let $X = Y = \{ 0,1\}$ and define the cost function $c$ by
\begin{equation*}
c(x,y,\xbar,\ybar) = \left\{ \begin{matrix} 1 & x=\xbar \textrm{ and } y=\ybar \\
 0 & \textrm{otherwise}\end{matrix} \right.
\end{equation*}
Let $\mu = \nu = \frac{1}{2} \delta_{0} + \frac{1}{2} \delta_{1}$. We first consider Problem \ref{prbm: coOT}. Like stated above, we construct $\pi$ and $\rho$ so that $\spt{\pi \otimes \rho}$ does not contain any part of the diagonal of $(X \times Y)^2$. We let 
\begin{align*}
\pi &= \frac{1}{2} \delta_{(0,0)} + \frac{1}{2} \delta_{(1,1)}, \\
\rho& = \frac{1}{2} \delta_{(0,1)} + \frac{1}{2} \delta_{(1,0)}.
\end{align*}
It is not hard to observe that $\pi, \rho \in \Pi(\mu,\nu)$ and 
\begin{equation*}
\iint c d\pi d\rho = 0.
\end{equation*}
Therefore, infimum of Problem \ref{prbm: coOT} is 0 in this example. On the other hand, for any $\gamma \in \Pi(\mu,\nu)$, we have
\begin{equation*}
\gamma = A \delta_{(0,0)} + B \delta_{(0,1)} + C \delta_{(1,0)} + D \delta_{(1,1)}
\end{equation*}
for some $A,B,C,D \geq 0$ such that
\begin{equation*}
A+B=C+D=A+C=B+D=\frac{1}{2}.
\end{equation*}
Then we can compute
\begin{align*}
\iint c d\gamma d\gamma & = A^2 + B^2 + C^2 + D^2 \\
& \geq 4 \times \left( \frac{A+B+C+D}{4} \right)^2 = \frac{1}{4}.
\end{align*}
Therefore, we obtain
\begin{equation*}
\inf_{\pi,\rho \in \Pi(\mu,\nu)} \iint c d\pi d\rho < \inf_{\pi \in \Pi(\mu,\nu)} \iint c d\pi d\pi.
\end{equation*}
\end{Ex}

\subsubsection{Other transportation problems}
There are other variations of the optimal transportation problems such as weak optimal transportation problem, martingale optimal transportation problem, optimal transportation with congestion. Among these variations, optimal transportation with congestion problem considers interaction that occurs along the transportation. This problem was considered by \cite{Carlier2008congestion}, but most of the study consider the equations describing the congestion. In \cite{cabrera2022optimal}, however, the author adds a quadratic transportation term to describe the interaction. 

\section{Elementary properties}\label{sec: prop}

\subsection{Existence, non-uniqueness, and non-locality}
Existence of a Kantorovich solution to the Kantorovich QOT can be proved similarly to the existence proof of classical optimal transportation problem. Before we prove the existence, we show a short lemma about a tensor product of weakly converging sequences of probability measures.

\begin{Lem}\label{lem: weakly converging tensor product}
Suppose $\{ \mu_k \}_{k=1}^\infty $ and $\{ \nu_k \}_{k=1}^\infty$ be sequences of probability measures on Polish spaces $X$ and $Y$ that converge weakly to $\mu_0$ and $\nu_0$ respectively. Then the sequence of probability measures $\{ \mu_k \otimes \nu_k \}_{k=1}^\infty$ weakly converges to $\mu_0 \otimes \nu_0$.
\end{Lem}
\begin{proof}
By Prokhorov's Theorem, $\{ \mu_k \}_{k=1}^\infty$ and $\{ \nu_k \}_{k=1}^\infty$ are tight. We claim that $\{ \mu_k \otimes \nu_k \}_{k=1}^\infty$ is also tight. Indeed, for any $\epsilon >0$, there exist compact sets $K \subset X$ and $L \subset Y$ such that
\begin{equation*}
\mu_k[X \setminus K] < \frac{\epsilon}{2}, \ \nu_k[Y \setminus L] < \frac{\epsilon}{2}
\end{equation*}
for any $k$. Then
\begin{equation*}
\mu_k \otimes \nu_k [(X \times Y) \setminus (K \times L)] \leq \mu_k[X \setminus K ] + \nu_k [Y \setminus L] < \frac{\epsilon}{2}
\end{equation*}
for any $k$. Noting that $K \times L$ is also compact, $\{ \mu_k \otimes \nu_k \}_{k=1}^\infty$ is also tight. Then, invoking Prokhorov again, any subsequence of $\{ \mu_k \otimes \nu_k \}_{k=1}^\infty$ has a subsequence $\{ \mu_{k_i} \otimes \nu_{k_i} \}_{i=1}^\infty$ which converges weakly to a probability measure $\gamma$. Then, we observe
\begin{align*}
\gamma[A \times B] & = \lim_{i \to \infty} \mu_{k_i} \otimes \nu_{k_i} [A \times B] \\
& = \lim_{i \to \infty} \mu_{k_i} [A] \times \nu_{k_i}[B]\\
& = \mu_0 [A] \times \nu_0[B].
\end{align*}
Therefore we obtain that $\gamma = \mu_0 \otimes \nu_0$. We just proved that any subsequence of $\{ \mu_k \otimes \nu_k\}_{k=1}^\infty$ has a subsequence that converges to $\mu_0 \otimes \nu_0$ weakly, and this implies that the whole sequence $\{ \mu_k \otimes \nu_k \}_{k=1}^\infty$ converges to $\mu_0 \otimes \nu_0$ weakly.
\end{proof}

\begin{Prop}\label{prop: exist opt plan}
Problem \ref{prbm: qOT} admits a solution.
\end{Prop}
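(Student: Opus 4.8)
The plan is to mimic the existence proof for Problem \ref{prbm: Kant OT}, the only new ingredient being that the functional $\pi \mapsto \iint c\, d\pi\, d\pi$ is lower semi-continuous along weakly converging sequences in $\Pi(\mu,\nu)$. First I would fix a minimizing sequence $\pi_k \in \Pi(\mu,\nu)$, i.e.
\begin{equation*}
\lim_{k \to \infty} \iint c(x,y,\xbar,\ybar)\, d\pi_k(x,y)\, d\pi_k(\xbar,\ybar) = \inf_{\pi \in \Pi(\mu,\nu)} \iint c\, d\pi\, d\pi.
\end{equation*}
Since $Z$ is compact, $\Pi(\mu,\nu) \subset \Prob{Z}$ is tight, so by Lemma \ref{lem: prokhorov} we may pass to a subsequence (not relabeled) with $\pi_k$ converging weakly to some $\pi_\infty \in \Prob{Z}$. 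Testing against sets of the form $A \times Y$ and $X \times B$ exactly as in the existence proof for Problem \ref{prbm: Kant OT} shows that the marginals of $\pi_\infty$ are $\mu$ and $\nu$, so $\pi_\infty \in \Pi(\mu,\nu)$.

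Next I would pass the cost to the limit. Applying Lemma \ref{lem: weakly converging tensor product} with $Z$ playing the role of both $X$ and $Y$ and with $\mu_k = \nu_k = \pi_k$, we get $\pi_k \otimes \pi_k \rightharpoonup \pi_\infty \otimes \pi_\infty$ in $\Prob{Z \times Z}$. Since $Z \times Z$ is compact and $c$ is lower semi-continuous, $c$ attains its minimum and in particular is bounded below; after adding a constant we may assume $c \geq 0$. It then remains to establish
\begin{equation*}
\iint c\, d(\pi_\infty \otimes \pi_\infty) \leq \liminf_{k \to \infty} \iint c\, d(\pi_k \otimes \pi_k),
\end{equation*}
which, combined with the minimizing property of $\pi_k$, gives $\iint c\, d(\pi_\infty\otimes\pi_\infty) \leq \inf_{\pi \in \Pi(\mu,\nu)} \iint c\, d\pi\, d\pi$, so that $\pi_\infty$ is a solution.

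For completeness I would prove this inequality, which is the standard fact that integration of a bounded-below lower semi-continuous function is lower semi-continuous under weak convergence. Fixing a compatible metric $d_W$ on the compact metric space $W := Z \times Z$, write $c$ as the increasing pointwise limit of the continuous functions
\begin{equation*}
c_n(w) := \inf_{w' \in W} \bigl( c(w') + n\, d_W(w,w') \bigr),
\end{equation*}
which are $n$-Lipschitz, satisfy $0 \leq c_n \leq c$, and increase to $c$ by lower semi-continuity of $c$. For each fixed $n$, weak convergence of $\pi_k \otimes \pi_k$ gives $\int c_n\, d(\pi_\infty \otimes \pi_\infty) = \lim_k \int c_n\, d(\pi_k \otimes \pi_k) \leq \liminf_k \int c\, d(\pi_k \otimes \pi_k)$; letting $n \to \infty$ and applying monotone convergence on the left-hand side yields the claim.

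The only genuine obstacle is this last point, namely that $c$ is merely lower semi-continuous rather than continuous, and it is disposed of by the inf-convolution regularization above; tightness, extraction of a weak limit, and stability of the marginal constraint are identical to the linear optimal transport case, and the tensor-product convergence is exactly Lemma \ref{lem: weakly converging tensor product}. Note that no symmetry of $c$ in its two $Z$-arguments is used.
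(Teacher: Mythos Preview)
Your proof is correct and follows essentially the same route as the paper's own argument: extract a weakly convergent minimizing subsequence, invoke Lemma \ref{lem: weakly converging tensor product} to pass to the tensor product, approximate the lower semi-continuous cost from below by continuous functions, and conclude by monotone convergence. Your version is slightly more explicit in that you justify boundedness below of $c$ and give the inf-convolution construction of the approximants $c_n$, whereas the paper simply asserts the existence of such a sequence; but there is no substantive difference.
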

\begin{proof}
Let $\{ \pi_k \}_{k=1}^\infty \subset \Pi(\mu,\nu)$ be a minimizing sequence.
\begin{equation*}
\lim_{k \to \infty} \iint c d\pi_k d\pi_k = \inf_{\pi \in \Pi(\mu,\nu)} \iint c d\pi d\pi.
\end{equation*}
Since $\Pi(\mu,\nu)$ is a tight set, $\pi_k$ converges to $\pi_0 \in \Pi(\mu,\nu)$ weakly up to a subsequence (which we still denote with $\pi_k$). Then by Lemma \ref{lem: weakly converging tensor product}, we obtain that $\pi_k \otimes \pi_k$ converges to $\pi_0 \otimes \pi_0$ weakly. Also, as $c$ is lower semi-continuous, there exists $\{ c_n \}_{n=1}^\infty$, a sequence of continuous functions such that $c_n \leq c$ and $c_n \nearrow c$ pointwisely. Then
\begin{align*}
\inf_{\pi \in \Pi(\mu,\nu)} \iint c d\pi d\pi & = \lim_{k \to \infty} \iint c d\pi_k d\pi_k \\
& \geq \lim_{k \to \infty} \iint c_n d \pi_k d\pi_k \\
& = \iint c_n d\pi_0 d\pi_0, 
\end{align*}
where we have used the weak convergence of $\pi_k \otimes \pi_k$ to $\pi_0 \otimes \pi_0$ in the last equality. We take $n \to \infty$ and invoke monotone convergence, we obtain
\begin{equation*}
\inf_{\pi \in \Pi(\mu,\nu)} \iint c d\pi d\pi \geq \iint c d\pi_0 d\pi_0.
\end{equation*}
This implies that $\pi_0$ is a minimizer.
\end{proof}

To show the existence of a Monge solution, one need to show that the Kantorovich solution $\pi$ is supported in the graph of a function $T:X \to Y$ which will be a Monge solution. In classical OT problem, this was studied with regularity of a potential function $\phi$, often using Monge-Amp\`ere type equations (\cite{Figalli2013holder}, \cite{Loeper2009reg}, \cite{MTW2005reg}). In the appendix of this paper, we derive a Monge-Amp\`ere type equation formally using a similar argument. In case of QOT, however, it seems that the Monge-Amp\`ere type equation that we obtain is non-local due to the non-local property as described in Example \ref{Ex: non local}. Although there are some discussions about non-local Monge-Amp\`ere equations (\cite{McCann2018unequal}, \cite{caffa2014nonloc}, \cite{Jhavaeri2020fractional}), The Monge-Amp\`ere type equation derived for QOT seemed to have a different form from the existing non-local Monge-Amp\`ere type equations.

Another elementary property that we can ask after the existence is the uniqueness of the solution. Like classical optimal transportation problem, QOT may have multiple solutions in general. 

\begin{Ex}\label{ex: qOT non unique}
Let $X=Y $  be $ \overline{B_1^2(0)}$, the unit disk in $\R^2$, and let $\mu= \nu = \frac{1}{\pi} d \mathcal{L}$, the uniform probability measure on the unit disk. We choose the cost function $c$ to be the $L_p$ distortion distance cost. 
\begin{equation*}
c(x,y,\xbar,\ybar) = \left| \|x-\xbar\|^2-\|y-\ybar\|^2 \right|^2.
\end{equation*} 
Then let $\pi_0 = (\id \times \id)_\sharp \mu \in \Pi(\mu,\nu)$. We compute
\begin{align*}
\iint c(x,y,\xbar,\ybar) d\pi_0 d\pi_0 & = \iint \left| \|x-\xbar\|^2-\|y-\ybar\|^2 \right|^2 d(\id \times \id)_\sharp \mu d (\id \times \id)_\sharp \mu \\
& = \iint \left| \| x - \xbar \|^2 - \| x - \xbar \|^2 \right|^2 d\mu d\mu \\
& =0.
\end{align*} 
Since $c \geq 0$, above computation shows that $\pi_0$ is a solution to QOT. On the other hand, let $R_\theta$ be the rotation by angle $\theta$. In particular, $R_\theta$ is an isometry. Let $\pi_0' = (\id \times R_\theta)_\sharp \mu \in \Pi(\mu,\nu)$. Then 
\begin{align*}
\iint c(x,y,\xbar,\ybar) d\pi_0 d\pi_0 & = \iint \left| \|x-\xbar\|^2-\|y-\ybar\|^2 \right|^2 d(\id \times R_\theta)_\sharp \mu d (\id \times R_\theta)_\sharp \mu \\
& = \iint \left| \| x - \xbar \|^2 - \| R_\theta x - R_\theta \xbar \|^2 \right|^2 d\mu d\mu \\
& = \iint \left| \| x - \xbar \|^2 - \| x - \xbar \|^2 \right|^2 d\mu d\mu \\
& =0.
\end{align*}
Hence $\pi_0'$ is also a solution of QOT. However, $\pi_0 \neq \pi_0'$ as their supports are different.
\end{Ex}

\begin{Rmk}
We remark here that in Example \ref{ex: qOT non unique}, the measures $\mu$ and $\nu$ were absolutely continuous with respect to the Lebesgue measure. In the classical optimal transportation problem, however, the solution would be unique when both $\mu$ and $\nu$ are absolutely continuous. The idea of Example \ref{ex: qOT non unique} is to use the symmetry of the cost function.
\end{Rmk}

The classical optimal transportation problems have locality property such as Proposition \ref{prop: ot local}. Roughly speaking, restriction of a solution is still optimal with corresponding marginals. In QOT case, however, it is not true in general due to the quadratic dependency which leads to non-local behavior of the solutions.

\begin{Ex}\label{Ex: non local}
Let $X=Y= \{ 0,1,2 \}$. We define a cost function $c$ as follows. We first let $c(x,y,\xbar,\ybar) = c(\xbar, \ybar, x, y)$, and define 
\begin{align*}
 c(1,0,1,0) = c(1,0,0,1) = c(0,1,0,1) & = 0, \\
 c(2,2,0,0) = c(2,2,1,1) = c(2,2,2,2) & = 0, \\
 c(2,2,1,0) = c(2,2,0,1) & = 10
\end{align*} 
Then we define the cost function to be 1 at the points where the value of $c$ is not assigned above. We consider $\mu = \nu = \frac{1}{3} \sum_{i=0}^2 \delta_i$. It is not hard to check that the solution is given by $T = \id$ with minimum cost $\frac{4}{3}$. next, we consider $\mu' = \nu' = \frac{1}{2} \sum_{i=0}^1 \delta_i$. Note that we have $\mu' =\nu' = T_\sharp \mu' \ll \mu = \nu$. However, the solution to the quadratic optimal transportation problem in this case is given by the map $T'$
\begin{align*}
T'(0) = 1, & \ T'(1) = 0,
\end{align*}
with minimum cost $0$. However, we have $T \neq T'$.
\end{Ex}

\subsection{Monotonicity and positive semi-definite costs}
In the classical optimal transportation problem, the Kantorovich solutions are $c$-cyclically monotone (\ref{def: cyclical monotone}) by Lemma \ref{lem: cyclical monotone OT solution}. We can expect a similar structure in QOT case. For instance, we let 
\begin{equation}\label{eqn: c_pi}
c_{\pi}(x,y) = \int c(x,y,\xbar,\ybar) d \pi(\xbar,\ybar),
\end{equation} 
where $\pi$ is a solution to the quadratic optimal transport problem, we may consider the classical transportation with the cost function $c_\pi$. If $\pi$ is the solution of this problem, we can obtain the $c_\pi$-cyclical monotonicity ${\pi}$. However, it is not trivial whether $\pi$ is a solution to the classical optimal transportation problem with cost function $c_\pi$. Still, we can show the cyclical monotonicity just using that we have a solution to the QOT.

\begin{Lem}\label{Lem: cyclical mono}
Let $\pi \in \Pi(\mu,\nu)$ be a solution to Problem \ref{prbm: qOT}. Define $c_\pi$ as \eqref{eqn: c_pi}. Then $\spt{\pi}$ is $c_\pi$-cyclically monotone.
\end{Lem}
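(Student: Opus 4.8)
The plan is to reduce the quadratic functional to an ordinary (linear) transport cost and then invoke the cyclical monotonicity already established for linear problems. Throughout I would take $c$ to be symmetric, $c(x,y,\xbar,\ybar)=c(\xbar,\ybar,x,y)$, which costs nothing since $\iint c\,d\rho\,d\rho$ is unchanged under symmetrizing $c$ and, in the symmetric case, $c_\pi$ is its own symmetrization. (For a genuinely asymmetric $c$ the same argument gives that $\spt{\pi}$ is $(c_\pi+\hat c_\pi)$-cyclically monotone, where $\hat c_\pi(\xbar,\ybar):=\int c(x,y,\xbar,\ybar)\,d\pi(x,y)$.)

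The first step is a first-variation computation exploiting convexity of $\Pi(\mu,\nu)$. Fix any $\rho\in\Pi(\mu,\nu)$ and set $\pi_t:=(1-t)\pi+t\rho\in\Pi(\mu,\nu)$ for $t\in[0,1]$. Expanding the bilinear functional and using $\iint c\,d\pi\,d\rho=\int c_\pi\,d\rho$ together with the symmetry of $c$,
\[
\iint c\,d\pi_t\,d\pi_t \;=\; \iint c\,d\pi\,d\pi \;+\; 2t\Big(\int c_\pi\,d\rho-\iint c\,d\pi\,d\pi\Big)\;+\;t^2\iint c\,d(\rho-\pi)\,d(\rho-\pi).
\]
Since $\pi$ minimizes the left side over $\Pi(\mu,\nu)$, the right derivative at $t=0$ is nonnegative, which forces $\int c_\pi\,d\rho\ \ge\ \iint c\,d\pi\,d\pi=\int c_\pi\,d\pi$. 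As $\rho$ was arbitrary, $\pi$ minimizes the linear functional $\rho\mapsto\int c_\pi\,d\rho$ over $\Pi(\mu,\nu)$; that is, $\pi$ solves Problem \ref{prbm: Kant OT} for the cost $c_\pi$.

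The second step is to conclude. Since $Z\times Z$ is compact and $c$ is continuous (the setting of Lemma \ref{lem: cyclical monotone OT solution}), $c$ is uniformly continuous, and $|c_\pi(x,y)-c_\pi(x',y')|\le\int|c(x,y,\xbar,\ybar)-c(x',y',\xbar,\ybar)|\,d\pi$ shows $c_\pi$ is continuous with the same modulus. Hence Lemma \ref{lem: cyclical monotone OT solution}, or equivalently Lemma \ref{lem: c-cyclical mono characterize solution}, applied to the cost $c_\pi$ yields that $\spt{\pi}$ is $c_\pi$-cyclically monotone, which is the claim.

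The delicate point, and where the quadratic nature of the problem actually enters, is the first-variation identity above: one must check that varying a single copy of $\pi$ inside $\iint c\,d\pi\,d\pi$ linearizes to exactly $\int c_\pi\,d(\cdot)$ and that the remaining self-interaction term is genuinely $O(t^2)$; both are immediate from bilinearity and boundedness of $c$, but this is the step that uses the symmetry of $c$ and separates the quadratic problem from a linear one. If instead one wants to argue directly from the minimality of $\pi$ for Problem \ref{prbm: qOT}, mimicking the proof of Lemma \ref{lem: cyclical monotone OT solution}: suppose $\spt{\pi}$ is not $c_\pi$-cyclically monotone, pick $(x_i,y_i)\in\spt{\pi}$, $1\le i\le n$, with $\sum_i c_\pi(x_i,y_i)>\sum_i c_\pi(x_i,y_{i+1})$, build the competitor $\tilde\pi=\pi-\pi_-+\pi_+\in\Pi(\mu,\nu)$ exactly as there (the radius chosen via continuity of $c_\pi$), and expand
\[
\iint c\,d\tilde\pi\,d\tilde\pi \;=\; \iint c\,d\pi\,d\pi \;+\; 2\int c_\pi\,d(\pi_+-\pi_-)\;+\;\iint c\,d(\pi_+-\pi_-)\,d(\pi_+-\pi_-);
\]
the middle term is negative and of order $\epsilon$ once the radius is small (by the assumed failure of cyclical monotonicity), while the last term is $O(\epsilon^2)$, so for $\epsilon$ small $\iint c\,d\tilde\pi\,d\tilde\pi<\iint c\,d\pi\,d\pi$, contradicting optimality of $\pi$.
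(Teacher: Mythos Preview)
Your proposal is correct, and the alternative argument you sketch in the final paragraph is essentially the paper's own proof: assume $\spt{\pi}$ fails $c_\pi$-cyclical monotonicity, build the Gangbo--McCann competitor $\tilde\pi_\epsilon=\pi+\epsilon(\pi_+-\pi_-)$, expand the quadratic cost using the symmetry of $c$, divide by $\epsilon$ and let $\epsilon\to 0$ to obtain $\int c_\pi\,d(\pi_+-\pi_-)\ge 0$, contradicting the assumed violation.

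Your primary argument is a slightly different and more modular packaging of the same first-variation idea. Rather than building a specific competitor tied to a putative cyclical-monotonicity violation, you vary toward an \emph{arbitrary} $\rho\in\Pi(\mu,\nu)$ and conclude directly that $\pi$ solves the linear Kantorovich problem with cost $c_\pi$; cyclical monotonicity then follows by a black-box appeal to Lemma~\ref{lem: cyclical monotone OT solution}. This separates the analytic step (linearization of the quadratic functional) from the combinatorial one (cyclical monotonicity of linear optima), and makes explicit the intermediate fact---$\pi$ is optimal for the linear cost $c_\pi$---which the paper's proof obtains only implicitly but relies on in the discussion immediately following the lemma. The paper's direct route, on the other hand, avoids invoking Lemma~\ref{lem: cyclical monotone OT solution} and is self-contained. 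Both proofs use the symmetry of $c$ in the same place, and your remark that one may symmetrize without loss is a useful clarification the paper leaves tacit.
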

\begin{proof}
By contradiction, suppose $\spt{\pi}$ is not $c_\pi$-cyclically monotone. Then $\exists (x_i, y_i) \in \spt{\pi}$ such that 
\begin{equation}\label{eqn: non cycl assume}
\sum_{i=1}^n c_\pi (x_i, y_i) > \sum_{i=1}^n c_\pi (x_i, y_{i+1}). 
\end{equation}
We define $\tilde{\pi}$ as in the proof of Lemma \ref{lem: cyclical monotone OT solution}. Then 
\begin{equation*}
\tilde{\pi} = \pi - \pi_{-} + \pi_{+},
\end{equation*}
where $\pi_{-} \leq \pi$ and $\pi_{-}$ and $\pi_{+}$ have the same marginal conditions so that $-\pi_{-} + \pi_{+}$ has zero marginals. Then $\tilde{\pi}_\epsilon = \pi + \epsilon( - \pi_{-} + \pi_{+}) \in \Pi(\mu,\nu)$ for any $\epsilon \ll 1$. Since $\pi$ is a solution (minimizer), we have
\begin{equation*}
\iint c d \tilde{\pi}_\epsilon d \tilde{\pi}_\epsilon - \iint c d \pi d\pi \geq 0.
\end{equation*}
Using symmetric condition of $c$, we obtain from above that
\begin{equation*}
2\epsilon \iint c d \pi d (\pi_+-\pi_-) + \epsilon^2 \iint c d\pi d (\pi_+-\pi_-) d (\pi_+-\pi_-) \geq 0.
\end{equation*}
Dividing by $\epsilon$ and taking $\epsilon \to 0$, we obtain 
\begin{equation*}
\iint c d \pi d (\pi_+-\pi_-) = \int c_\pi  d (\pi_+-\pi_-) \geq 0.
\end{equation*}
This, by construction of $\pi_-$ and $\pi_+$, implies 
\begin{equation*}
\sum_{i=1}^n c_\pi (x_i, y_i) \leq \sum_{i=1}^n c_\pi (x_i, y_{i+1})
\end{equation*}
which contradicts to \eqref{eqn: non cycl assume}. Therefore $\spt{\pi}$ has to be $c_\pi$-cyclically monotone.
\end{proof}

\begin{Rmk}\label{rmk: transpose cyclical mono}
We would like to point out that if $\pi$ is a solution to Problem \ref{prbm: qOT}, then $\pi$ is also a minimizer of $\iint c^* d\pi d\pi$ where $c^*(z,\zbar) = c(\zbar,z)$. If the cost function $c$ is not symmetric, that is, if $c(z,\zbar) \neq c(\zbar,z)$, then one can apply Lemma \ref{Lem: cyclical mono} with cost $c^*$ and obtain that $\spt{\pi}$ is also $c^*_\pi$-cyclically monotone. 
\end{Rmk}

In case of classical optimal transportation problem, $c$-cyclical monotonicity characterizes the solutions (Lemma \ref{lem: c-cyclical mono characterize solution}). We can try to use a similar argument to obtain the characterization. Suppose $\pi$ is a probability measure in $\Pi(\mu,\nu)$ which is $c_\pi$-cyclically monotone. Then, from Lemma \ref{lem: c-cyclical mono characterize solution}, we obtain that for any measure $\rho \in \Pi(\mu,\nu)$, we have
\begin{equation}\label{eqn: int c rho geq int c pi}
\int c_\pi d \rho \geq \int c_\pi d \pi.
\end{equation}
Then, we observe
\begin{align*}
\iint c d\rho d\rho  =& \iint c d(\pi + (\rho - \pi))(z) d(\pi + (\rho - \pi))(\zbar) \\
 =& \iint c d\pi(z) d\pi(\zbar) + \iint c d\pi(z) d (\rho - \pi)(\zbar) \\
&+ \iint c d (\rho - \pi)(z) d\pi(\zbar) + \iint c d (\rho - \pi)(z) d (\rho - \pi)(\zbar)\\
\geq& \iint c d\pi(z) d\pi(\zbar) + \iint c d(\rho - \pi)(z) d (\rho - \pi)(\zbar).
\end{align*}
where we have used Remark \ref{rmk: transpose cyclical mono} and \eqref{eqn: int c rho geq int c pi} in the last inequality. Hence, to claim that $\pi$ achieves the minumum, we need $\iint c d (\rho - \pi) d (\rho - \pi) \geq 0$.

\begin{Def}\label{def: positive semi-def cost}
Let $\Sigma(0,0)$ be the set of signed Borel measures on $X\times Y$ with zero marginals.
\begin{equation*}
\Sigma(0,0) = \{ \rho \in \Meas{X\times Y} | \Proj{X}_\sharp \rho = 0, \Proj{Y}_\sharp \rho = 0 \}.
\end{equation*}
A cost function $c: Z^2 \to \R$ is called \emph{positive semi-definite} if the following inequality holds for any measure $\rho \in \Sigma(0,0)$.
\begin{equation*}
\iint c(z, \zbar) d \rho(z) d \rho(\zbar) \geq 0.
\end{equation*}
\end{Def}

\begin{Rmk}\label{Rmk: positive nonsymmetric}
If $c$ is positive semi-definite, then one can show that for any $\rho_0, \rho_1 \in \Sigma(0,0)$, 
\begin{equation*}
\iint c d \rho_0(z) d\rho_1 (\zbar) = \iint c d \rho_1(z) d \rho_0(\zbar).
\end{equation*}
However, this does not mean that we have $c(z,\zbar) = c(\zbar,z)$. In particular, positive semi-definite \emph{does not} imply 
\begin{equation*}
\iint c d \pi_0 (z) d\pi_1(\zbar) = \iint c d\pi_1(z) d\pi_0(\zbar).
\end{equation*}
A simple example of a cost function that is positive semi-definite but not symmetric can be constructed on the two point space $X=Y=\{ 0,1\}$. Let $c(x,y,\xbar,\ybar)$ be positive when the sum $x+y+\xbar+\ybar$ is even and 0 otherwise. Any such cost $c$ is a positive semi-definite, but it is not necessarily symmetric.
\end{Rmk}

If a cost function $c$ is positive semi-definite, then we can show that $c_\pi$ characterizes the solutions using the argument above.

\begin{Prop}\label{prop: characterization}
Let $c$ be a positive semi-definite cost function, and let $\pi \in \Pi(\mu,\nu)$ be a probability measure that is $c_\pi$-cyclically monotone. Then $\pi$ is a solution to Problem \ref{prbm: qOT}.
\end{Prop}
\begin{proof}
From the argument above Definition \ref{def: positive semi-def cost}, we have 
\begin{equation*}
\iint c d\rho d\rho \geq \iint c d\pi d\pi + \iint c d(\rho-\pi) d (\rho -\pi).
\end{equation*}
Since $c$ is positive semi-definite and $\rho-\pi \in \Sigma(0,0)$, the last term on the right hand side is non-negative. Therefore, we obtain
\begin{equation*}
\iint c d\rho d\rho \geq \iint c d\pi d\pi,
\end{equation*}
for any $\rho \in \Pi(\mu,\nu)$. This implies that $\pi$ achieves the infimum of Problem \ref{prbm: qOT}.
\end{proof}

Using the characterization from Proposition \ref{prop: characterization}, we can prove a qualitative stability result. 

\begin{Prop}\label{prop: qot stability}
Let $c$ be a continuous, positive semi-definite cost function. Let $ \mu_n $ and $ \nu_n$ be the sequences of probability measures on $X$ and $Y$ respectively, and let $\pi_n$ be a solution to Problem \ref{prbm: qOT} with marginals $\mu_n$ and $\nu_n$.  Suppose $\mu_n$ and $\nu_n$ converges weakly to $\mu_0$ and $\nu_0$ respectively. Then, up to a subsequence, $\pi_n$ converges weakly to $\pi_0$ which is a solution to Problem \ref{prbm: qOT} with marginals $\mu_0$ and $\nu_0$. 
\end{Prop}
\begin{proof}
Note first that the weak convergence of the sequences $\mu_n$ and $\nu_n$ implies that $\{ \pi_n \}$ is tight. Therefore, up to a subsequence, we can assume that $\pi_n$ converges weakly to $\pi_0$. Moreover, the marginal conditions of $\pi_n$ imply that $\pi_0 \in \Pi(\mu_0, \nu_0)$. Denote $c_n(z) = c_{\pi_n}(z) = \int c(z,\zbar) d\pi_n (\zbar)$. Proposition \ref{prop: characterization} shows that $\pi_n$  is a Kantorovich solution to the classical optimal transportation problem with marginals $\mu_n$ and $\nu_n$. Also, Weak convergence of $\pi_n$ to $\pi_0$ implies that $c_n$ converges to $c_0$ uniformly. Then we use \ref{prop: ot stability} to obtain that $\pi_0$ is a Kantorovich solution to the classical optimal transportation problem with cost function $c_0 = c_{\pi_0}$ and marginals $\mu_0$ and $\nu_0$. Then Lemma \ref{lem: cyclical monotone OT solution} shows that $\pi_0$ is $c_{\pi_0}$-monotone and we deduce from Proposition \ref{prop: characterization} that $\pi_0$ is a solution to Problem \ref{prbm: qOT} with marginals $\mu_0$ and $\nu_0$.
\end{proof}

Moreover, with positive semi-definite cost function, the total cost $\iint c d\pi d\pi$ becomes a convex function of $\pi$. 

\begin{Lem}\label{lem: conv total cost}
Let $c$ be a positive semi-definite cost function. Then the total cost is a convex function on $\Pi(\mu,\nu)$.
\end{Lem}
\begin{proof}
Let $\pi_0, \pi_1 \in \Pi(\mu,\nu)$ and let $\pi_t = t \pi_0 + (1-t)\pi_1 \in \Pi(\mu,\nu)$. Then 
\begin{align*}
  \iint c d\pi_t d\pi_t 
 = & t^2 \iint c d\pi_0 d\pi_0 + (1-t)^2 \iint c d\pi_1 d\pi_1 \\
 & + t(1-t) \iint c d\pi_0  d\pi_1 + t(1-t) \iint c d\pi_1  d\pi_2 
\end{align*}
and therefore
\begin{align*}
&\frac{d^2}{dt^2} \iint c d\pi_t d\pi_t\\
  =& 2\left( \iint c d\pi_0 d\pi_0 + \iint c d\pi_1 d\pi_1 -\iint c d\pi_0 d\pi_1 - \iint c d\pi_1 d\pi_0 \right) \\
 =& 2\iint c d(\pi_1 - \pi_0) d(\pi_1-\pi_0) \geq 0.
\end{align*}
This implies the convexity of the total cost.
\end{proof}

\begin{Rmk}\label{rmk: positive definite cost unique sol}
If we have a positive definite cost function, that is, if
\begin{equation*}
\iint c d\rho d\rho >0, \forall \rho \in \Sigma(0,0),
\end{equation*}
then by the proof of Lemma \ref{lem: conv total cost} the total cost is strictly convex and the solution to QOT will be unique. In such case, the whole sequence $\pi_n$ in Proposition \ref{prop: qot stability} will converge to the unique solution $\pi_0$.
\end{Rmk}

One can view $(\rho, \overline{\rho}) \mapsto \iint c d\rho d \overline{\rho}$ as a bi-linear form defined on the space of Borel measures. Then the positive semi-definite cost functions corresponds to the positive semi-definite quadratic forms on $\Sigma(0,0)$. Positive semi-definite operators on Euclidean spaces or complex Hilbert spaces have their square root (For example, see Theorem 7.16 in \cite{conway2000course}, which shows existence of square root operator for a compact positive operator on a complex Hilbert space). In our case, however, the space of Borel measures does not have a Hilbert structure. Therefore, we make another definition for the positive semi-definite cost functions that have its square root in the following sense.

\begin{Def}\label{def: squared cost}
a cost function $c: Z^2 \to \R$ is called \emph{squared} if there is a metric measure space $(W,d_W, \omega)$, and a function $S:Z \times W \to \R$ such that
\begin{equation*}
c(z,\bar{z}) = \int S(z,w)S(\zbar,w) d\omega(w).
\end{equation*}
We call $S$ a square root of $c$. 
\end{Def}

\begin{Rmk}
We have used the probability measure $\omega$ in Definition \ref{def: squared cost} for computational benefit. In fact, as long as the integrals against $\omega$ are finite, one can use a positive measure that is not necessarily a probability. However, there could be a constant factor difference in the formulas.
\end{Rmk}

When we assume some regularity on the square root $S$ of $c$, we denote regularity of $S$ in front of the term squared. For instance, if $S$ is $C^{0,\alpha}(Z\times W)$, then we say that $c$ is a $C^{0,\alpha}$-squared cost.

\begin{Lem}
Continuously squared costs are positive semi-definite.
\end{Lem}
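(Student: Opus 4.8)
The plan is to show the defining inequality of positive semi-definiteness directly from the square-root representation, using Fubini's theorem to swap the order of integration. Suppose $c$ is continuously squared, so there is a metric measure space $(W, d_W, \omega)$ and a continuous function $S : Z \times W \to \R$ with
\begin{equation*}
c(z, \zbar) = \int_W S(z, w) S(\zbar, w) \, d\omega(w).
\end{equation*}
Fix an arbitrary Borel measure $\rho$ on $Z$. First I would substitute this representation into the double integral defining positive semi-definiteness:
\begin{equation*}
\iint c(z, \zbar) \, d\rho(z) \, d\rho(\zbar) = \iint \left( \int_W S(z,w) S(\zbar, w) \, d\omega(w) \right) d\rho(z) \, d\rho(\zbar).
\end{equation*}

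Next I would invoke Fubini's theorem to move the $\omega$-integral to the outside, obtaining
\begin{equation*}
\iint c \, d\rho \, d\rho = \int_W \left( \iint S(z,w) S(\zbar, w) \, d\rho(z) \, d\rho(\zbar) \right) d\omega(w) = \int_W \left( \int_Z S(z,w) \, d\rho(z) \right)^2 d\omega(w),
\end{equation*}
where the last equality uses that the double integral of a product $S(z,w) S(\zbar,w)$ against $\rho \otimes \rho$ factors as the square of the single integral $\int_Z S(z,w)\, d\rho(z)$. Since the integrand $\left( \int_Z S(z,w)\, d\rho(z) \right)^2$ is non-negative for every $w$, the outer integral against the (non-negative) measure $\omega$ is non-negative, which is exactly the desired conclusion $\iint c \, d\rho \, d\rho \geq 0$.

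The main obstacle is the justification of the Fubini interchange: this requires either integrability of $|S(z,w) S(\zbar,w)|$ against $\rho \otimes \rho \otimes \omega$ (Fubini) or non-negativity of the integrand (Tonelli), and in general $S$ need not be sign-definite, so one cannot appeal to Tonelli directly. This is where the hypotheses enter. If $Z$ is compact (as assumed throughout the paper for $X$, $Y$) and $S$ is continuous, then $S$ is bounded on $Z \times K$ for compact $K \subset W$; one should either assume $\omega$ is finite (or that $W$ is compact, consistent with the informal use of ``metric measure space'' elsewhere with $\omega \in \Prob{W}$), in which case $|S|$ is bounded on all of $Z \times W$ by some constant $M$ and $|S(z,w)S(\zbar,w)| \leq M^2 \in L^1(\rho \otimes \rho \otimes \omega)$, legitimizing the swap. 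I would state this integrability hypothesis explicitly (finiteness of $\omega$, or boundedness of $S$) at the start of the proof, since it is implicitly needed even for $c$ in Definition \ref{def: squared cost} to be well-defined and finite-valued. With that in hand, the remaining steps are routine measure-theoretic manipulations.
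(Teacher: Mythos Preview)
Your proposal is correct and follows essentially the same approach as the paper's proof: substitute the square-root representation, apply Fubini to bring the $\omega$-integral outside, factor the inner integral as a square, and conclude non-negativity. If anything, you are more careful than the paper, which performs the Fubini swap without comment; your discussion of the integrability hypotheses needed to justify it is a welcome addition.
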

\begin{proof}
Suppose $c(z, \zbar) = \int S(z,w)S(\zbar,w)d\omega$ be a squared cost function with $S$ continuous. Then, for any Borel measure $\rho$ on $Z$, we have
\begin{align*}
\iint c d\rho d\rho & = \iint \left( \int S(z,w)S(\zbar,w) d\omega(w) \right) d\rho(z) d\rho(\zbar) \\
& = \int \left( \int S(z,w) d\rho(z) \right)^2 d\omega(w) \geq 0.
\end{align*}
\end{proof}

An advantage of squared cost is that we can view the cost $\iint c d\pi d\pi$ as an $L^2$-norm of some function defined using the measure $\pi$. We denote
\begin{equation*}
S_\pi(w) = \int S(z,w) d\pi(z).
\end{equation*}
Then we obtain
\begin{equation*}
\iint c d\pi d\pi = \| S_\pi \|^2_{L^2}.
\end{equation*}
Hence QOT asks to find a measure $\pi \in \Pi(\mu,\nu)$ such that $S_\pi$ is a closest to $0$. We denote the set of $S_\pi$ by $S_{\Pi(\mu,\nu)}$.
\begin{equation*}
S_{\Pi(\mu,\nu)} = \left\{ S_\pi(w) = \int S(z,w) d\pi(w) | \pi \in \Pi(\mu,\nu) \right\}.
\end{equation*}
The total cost is then 
\begin{equation*}
\inf_{\Pi(\mu,\nu)} \iint c d\pi d\pi = \mathrm{dist}(S_{\Pi(\mu,\nu)}, 0)^2.
\end{equation*}
The set $S_{\Pi(\mu,\nu)}$ enjoys some important properties. 

\begin{Lem} 
Suppose $S$ is continuous, then \hfill

\begin{tabular}{cl}
$(1)$& $S_{\Pi(\mu,\nu)}$ is convex, \\
$(2)$& $S_{\Pi(\mu,\nu)}$ is strongly compact in $L^2(\omega)$. 
\end{tabular}
\end{Lem}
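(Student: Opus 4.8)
The plan is to prove the two claims separately, both relying on the map $\pi \mapsto S_\pi$ being affine and weak-to-strong continuous. For convexity, note that if $\pi_0, \pi_1 \in \Pi(\mu,\nu)$ and $t \in [0,1]$, then $\pi_t := t\pi_0 + (1-t)\pi_1 \in \Pi(\mu,\nu)$ since $\Pi(\mu,\nu)$ is convex (the marginal constraints are linear), and by linearity of the integral defining $S_\pi$ we get $S_{\pi_t}(w) = \int S(z,w)\,d\pi_t(z) = t S_{\pi_0}(w) + (1-t) S_{\pi_1}(w)$. Hence $S_{\Pi(\mu,\nu)}$ is the image of the convex set $\Pi(\mu,\nu)$ under an affine map, so it is convex.

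For strong compactness in $L^2(\omega)$, the idea is to realize $S_{\Pi(\mu,\nu)}$ as the continuous image of a weakly compact set. First, $\Pi(\mu,\nu)$ is weakly sequentially compact: it is tight (as noted after Lemma \ref{lem: prokhorov}, since $X,Y$ are compact) and weakly closed (from the continuity of the marginal maps, as in the proof that Problem \ref{prbm: Kant OT} admits a solution), so Prokhorov's theorem gives weak sequential compactness. Next I would show the map $\pi \mapsto S_\pi$ is continuous from $\Pi(\mu,\nu)$ with the weak topology into $L^2(\omega)$ with the norm topology. Suppose $\pi_k \to \pi_\infty$ weakly. For each fixed $w$, continuity of $S$ gives $S_{\pi_k}(w) = \int S(z,w)\,d\pi_k(z) \to \int S(z,w)\,d\pi_\infty(z) = S_{\pi_\infty}(w)$, i.e.\ pointwise convergence on $W$. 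Moreover $S$ is continuous on the compact set $Z \times W$, hence bounded by some constant $M$, so $|S_{\pi_k}(w)| \leq M$ for all $k,w$; since $\omega$ is a finite measure (it is a measure measure space; if it is not finite one restricts to the relevant support or normalizes), dominated convergence gives $\| S_{\pi_k} - S_{\pi_\infty} \|_{L^2(\omega)} \to 0$. Thus $\pi \mapsto S_\pi$ is weakly-to-strongly sequentially continuous, so $S_{\Pi(\mu,\nu)}$, being the continuous image of a sequentially compact set, is sequentially compact in $L^2(\omega)$, hence compact since $L^2(\omega)$ is metrizable.

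The main obstacle is the weak-to-strong continuity claim, specifically upgrading pointwise-in-$w$ convergence of $S_{\pi_k}$ to $L^2(\omega)$-norm convergence: this is where uniform equicontinuity or at least uniform boundedness of the family $\{S_{\pi_k}\}$ in $w$ is essential, and this is exactly where the continuity hypothesis on $S$ (together with compactness of $Z$, giving a uniform bound $M = \sup_{Z \times W} |S|$ when $W$ is also compact) does the work. If $W$ is not assumed compact, one needs instead that $\omega$ is finite and $|S(\cdot, w)| \leq M$ uniformly, which should be folded into the definition of continuously squared cost or argued from the finiteness of $c(z,z) = \int S(z,w)^2\,d\omega(w)$; I would state this as the precise regularity needed. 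A subtlety worth flagging: pointwise convergence of $S_{\pi_k}(w)$ for each fixed $w$ uses that $z \mapsto S(z,w)$ is a fixed bounded continuous function tested against the weakly convergent sequence $\pi_k$, which is immediate, so the only real content is the passage to the $L^2$ norm, handled by dominated convergence as above.
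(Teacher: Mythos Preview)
Your proposal is correct and follows essentially the same approach as the paper: convexity via affinity of $\pi \mapsto S_\pi$ together with convexity of $\Pi(\mu,\nu)$; compactness by taking a sequence $S_{\pi_k}$, passing to a weakly convergent subsequence $\pi_k \to \pi$ in $\Pi(\mu,\nu)$, obtaining pointwise convergence $S_{\pi_k}(w) \to S_\pi(w)$ from weak convergence, and then using the uniform bound $\sup|S|$ and dominated convergence to upgrade to $L^2(\omega)$ convergence. Your explicit flagging of the need for $\omega$ to be finite (or $W$ compact) so that the constant dominating function is integrable is a point the paper leaves implicit.
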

\begin{proof}
(1): We compute 
\begin{align*}
tS_{\pi_1} + (1-t) S_{\pi_0} & = t \int S(z,w) d \pi_1(z) + (1-t) \int S(z,w) d\pi_0(z) \\
& = \int S(z,w) d (t \pi_0 + (1-t) \pi_1)(z) \\
& = S_{t\pi_1 + (1-t) \pi_0}
\end{align*}
and $t\pi_1 + (1-t) \pi_0 \in \Pi(\mu,\nu)$.\\
(2): Let $\{ g_k \}_{k=1}^\infty$ be a sequence in $S_{\Pi(\mu,\nu)}$. Then $g_k = S_{\pi_k}$ for some $\pi_k \in S_{\Pi(\mu,\nu)}$. As $\Pi(\mu,\nu)$ is weakly compact, up to a subsequence (which we still denote with $\pi_k$), there exists a $\pi \in \Pi(\mu,\nu)$ such that $\pi_k$ converges to $\pi$ weakly. Define $g = S_\pi$. We claim that $g_k$ converges to $g$ strongly. Note first that 
\begin{equation*}
| g_k(w) - g(w) | \leq \int |S(z,w)| d \pi_k(w) + \int |S(z,w)| d\pi(w) \leq 2 \sup S.
\end{equation*}
Then we compute
\begin{align*}
\lim_{k \to \infty} \| g_k - g \|_{L^2(\omega)}^2 & = \lim_{k \to \infty} \int | g_k - g |^2 d\omega \\
& = \int \lim_{k \to \infty} | g_k - g |^2 d\omega \\
& = \int \lim_{k \to \infty} \left( \int S(z,w) d \pi_k - \int S(z,w) d\pi \right)^2 d\omega \\
& =  \int 0 d\omega = 0
\end{align*}
where we have used the dominated convergence theorem in the second equality and weak convergence of $\pi_k$ to $\pi$ in the fourth equality. Since $g \in \Pi(\mu,\nu)$ by definition, we obtain that $S_{\Pi(\mu,\nu)}$ is compact. 
\end{proof}

We use this structure of the squared cost functions to formulate the dual problem in the next section.

\section{Duality for the squared costs}\label{sec: dual}
Since we view QOT as a version of the classical optimal transportation problem, one natural question to ask is to find a duality formula. Kantorovich duality for the classical optimal transportation problem states
\begin{equation}\label{eqn: Kantorovich duality}
\inf_{\pi \in \Pi(\mu,\nu)} \int c(x,y) d\pi = \sup_{(\pi, \psi) \in \Phi_c}\left( \int \phi d \mu + \int \psi d\mu \right),
\end{equation}
where
\begin{equation*}
\Phi_c = \{ (\phi, \psi) L^1(\mu) \times L^1 (\nu) | \phi(x) + \psi(y) \leq c(x,y) \textrm{ for } \mu \otimes \nu \textrm{ a.e. } (x,y) \}.
\end{equation*}
Kantorovich duality can be seen as a strong duality of Lagrangian dual problem of the minimization problem Problem \ref{prbm: OT}. A work about finding a dual problem in this view point (Strong duality of Lagrangian dual) of the quadratic optimal transportation problem can be found in \cite{cabrera2022optimal}.

In this section, we present a different duality formula for Problem \ref{prbm: qOT} with a squared cost function $c$. 

Using the dual formulation of $L^2$ norm, we see that
\begin{align*}
\inf_{\Pi(\mu,\nu)} \sqrt{\iint c d\pi d\pi} & = \inf_{\Pi(\mu,\nu)} \| S_\pi \|_{L^2} \\
& = \inf_{\Pi(\mu,\nu)} \sup_{\| f \|_{L^2} \leq 1} \langle S_\pi, f \rangle_{L^2}. \numberthis \label{eqn: using L2 dual}
\end{align*}
Note that we have used positiveness of the total cost $\iint c d\pi d\pi$ to take the square root. Suppose for now that we can exchange the $\sup$ and $\inf$ in the last line. Then we obtain
\begin{align*}
& \sup_{\| f \|_{L^2}\leq 1} \inf_{\Pi(\mu,\nu)} \langle S_\pi, f \rangle_{L^2}\\
=&\sup_{\| f \|_{L^2}\leq 1} \inf_{\Pi(\mu,\nu)} \int \left( \int S(z,w) d\pi(z) \right) f(w) d\omega(w) \\
=& \sup_{\| f \|_{L^2}\leq 1} \inf_{\Pi(\mu,\nu)} \int \left( \int S(x,y,w) f(w) d\omega(w) \right) d\pi(x,y).
\end{align*}
Denote
\begin{equation*}
S^f (x,y) = \int S(x,y,w) f(w) d\omega(w).
\end{equation*}
Then we obtain
\begin{equation}\label{eqn: sqrt is ot}
\inf_{\Pi(\mu,\nu)} \sqrt{\iint c d\pi d\pi} = \sup_{\| f \|_{L^2}\leq 1} \inf_{\Pi(\mu,\nu)} \int S^f(x,y) d\pi(x,y).
\end{equation}
We can see that the $\inf$ part inside $\sup$ on the right hand side is in fact the classical optimal transportation problem Problem \ref{prbm: OT} with the cost function $S^f$. Hence we can invoke Kantorovich duality \eqref{eqn: Kantorovich duality} and obtain
\begin{equation*}
\inf_{\Pi(\mu,\nu)} \sqrt{\iint c d\pi d\pi}=\sup_{\| f \|_{L^2}\leq 1} \sup_{\Phi_{S^f}}\left(\int \phi d\mu + \int \psi d\nu \right),
\end{equation*}
which suggests a dual problem with a very similar formula with \eqref{eqn: Kantorovich duality}, but includes another function $f$ that needs to be considered. 

We show that we can indeed swab the $\inf$ and $\sup$.

\begin{Lem}\label{lem: inf sup swab}
\begin{equation}\label{eqn: inf sup swab}
\inf_{\Pi(\mu,\nu)} \sup_{\| f \|_{L^2} \leq 1} \langle S_\pi, f \rangle_{L^2} =  \sup_{\| f \|_{L^2} \leq 1} \inf_{\Pi(\mu,\nu)} \langle S_\pi, f \rangle_{L^2}.
\end{equation}
\end{Lem}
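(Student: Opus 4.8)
The plan is to recognize the left-hand side of \eqref{eqn: inf sup swab} as the distance from the origin to the convex, strongly compact set $S_{\Pi(\mu,\nu)}\subset L^2(\omega)$, and then to produce the optimizing $f$ on the right-hand side as the normalized metric projection of $0$ onto that set. First I would rewrite, using the Hilbert-space duality $\sup_{\|g\|_{L^2}\le 1}\langle h, g\rangle_{L^2}=\|h\|_{L^2}$,
\[
\inf_{\Pi(\mu,\nu)}\sup_{\|f\|_{L^2}\le 1}\langle S_\pi, f\rangle_{L^2}=\inf_{\Pi(\mu,\nu)}\|S_\pi\|_{L^2}=\mathrm{dist}\bigl(S_{\Pi(\mu,\nu)},0\bigr).
\]
Since $S_{\Pi(\mu,\nu)}$ is nonempty (it contains $S_{\mu\otimes\nu}$), convex, and strongly compact in $L^2(\omega)$ by the preceding lemma, this infimum is attained: there is $\pi^\ast\in\Pi(\mu,\nu)$ such that $g^\ast:=S_{\pi^\ast}$ realizes $\|g^\ast\|_{L^2}=\mathrm{dist}(S_{\Pi(\mu,\nu)},0)$, and $g^\ast$ is the metric projection of $0$ onto the closed convex set $S_{\Pi(\mu,\nu)}$.

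Next I would record the ``easy'' inequality $\sup_f\inf_\pi\langle S_\pi,f\rangle_{L^2}\le\inf_\pi\sup_f\langle S_\pi,f\rangle_{L^2}$, which holds for any function of two variables on any domains, so that only the reverse inequality $\sup_f\inf_\pi\langle S_\pi,f\rangle_{L^2}\ge\mathrm{dist}(S_{\Pi(\mu,\nu)},0)$ remains. If $g^\ast=0$ this is immediate, since then $\mathrm{dist}(S_{\Pi(\mu,\nu)},0)=0$ while $\sup_f\inf_\pi\langle S_\pi,f\rangle_{L^2}\ge\inf_\pi\langle S_\pi,0\rangle_{L^2}=0$. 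Otherwise put $f^\ast:=g^\ast/\|g^\ast\|_{L^2}$, which is admissible since $\|f^\ast\|_{L^2}=1$. The variational characterization of the projection gives $\langle 0-g^\ast,\,g-g^\ast\rangle_{L^2}\le 0$ for every $g\in S_{\Pi(\mu,\nu)}$, i.e. $\langle S_\pi,g^\ast\rangle_{L^2}\ge\|g^\ast\|_{L^2}^2$ for every $\pi\in\Pi(\mu,\nu)$; dividing by $\|g^\ast\|_{L^2}$ yields $\langle S_\pi,f^\ast\rangle_{L^2}\ge\|g^\ast\|_{L^2}$ for all $\pi$, hence $\inf_{\Pi(\mu,\nu)}\langle S_\pi,f^\ast\rangle_{L^2}\ge\|g^\ast\|_{L^2}=\mathrm{dist}(S_{\Pi(\mu,\nu)},0)$. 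Taking the supremum over $f$ closes the loop and proves \eqref{eqn: inf sup swab}.

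As an alternative route I could instead invoke Sion's minimax theorem directly: $(\pi,f)\mapsto\langle S_\pi,f\rangle_{L^2}$ is affine (hence quasi-convex) and weakly continuous in $\pi$ on the weakly compact convex set $\Pi(\mu,\nu)$, and affine (hence quasi-concave) and norm-continuous in $f$ on the convex set $\{\|f\|_{L^2}\le 1\}$, so Sion's hypotheses hold with the $\pi$-side compact. I expect the only point needing care in the first route is the appeal to strong compactness of $S_{\Pi(\mu,\nu)}$, which is what guarantees that the projection $g^\ast$ exists and lies in the set; but this is precisely the content of the lemma already established, so there is no genuine obstacle here — the minimax identity reduces to standard Hilbert-space geometry once that compactness is in hand.
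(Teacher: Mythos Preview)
Your argument is correct and close in spirit to the paper's, but the tool you invoke to produce the optimal direction is different. The paper also identifies the left-hand side with $d=\mathrm{dist}(S_{\Pi(\mu,\nu)},0)$ and treats the case $0\in S_{\Pi(\mu,\nu)}$ separately, but for $0\notin S_{\Pi(\mu,\nu)}$ it applies the Hahn--Banach separation theorem to the disjoint convex sets $B_d(0)$ and $S_{\Pi(\mu,\nu)}$, obtaining (via Riesz) a unit vector $\lambda$ with $\inf_{g\in S_{\Pi(\mu,\nu)}}\langle\lambda,g\rangle\ge\sup_{h\in B_d(0)}\langle\lambda,h\rangle=d$. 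You instead use the Hilbert projection theorem: the nearest point $g^\ast$ exists because $S_{\Pi(\mu,\nu)}$ is closed convex, and the obtuse-angle variational inequality $\langle -g^\ast,g-g^\ast\rangle\le 0$ immediately yields $\langle g/\|g^\ast\|,\,g^\ast/\|g^\ast\|\rangle\ge\|g^\ast\|$ wait, rather $\langle g, g^\ast\rangle\ge\|g^\ast\|^2$, so $f^\ast=g^\ast/\|g^\ast\|$ does the job. Your route is slightly more constructive: it pins down the maximizing $f$ explicitly as $S_{\pi^\ast}/\|S_{\pi^\ast}\|_{L^2}$ for an optimal plan $\pi^\ast$, which the paper only remarks afterwards. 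The paper's Hahn--Banach route, on the other hand, does not actually need compactness of $S_{\Pi(\mu,\nu)}$ (only convexity and disjointness from the open ball), whereas your projection argument leans on closedness, supplied here by the strong compactness lemma. Your alternative via Sion's minimax theorem is also valid and arguably the shortest path.
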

\begin{proof}

We first consider the case $0 \in S_{\Pi(\mu,\nu)}$. If $0 \in S_{\Pi(\mu,\nu)}$ then
\begin{equation*}
\inf_{\Pi(\mu,\nu)} \sup_{\| f \|_{L^2} \leq 1} \langle S_\pi, f \rangle_{L^2} = \inf_{\Pi(\mu,\nu)} \| S_\pi \| = 0.
\end{equation*}
On the other hand, we have
\begin{equation*}
\inf_{\Pi(\mu,\nu)} \langle S_\pi, f \rangle_{L^2} \leq \langle 0 , f \rangle_{L^2}=0
\end{equation*}
so that $\sup_{\| f \|_{L^2} \leq 1} \inf_{\Pi(\mu,\nu)} \langle S_\pi, f \rangle_{L^2} \leq 0$. Since the equality can be attained by choosing $f=0$, we see the right hand side of \eqref{eqn: inf sup swab} is also 0. Therefore we obtain the equality when $0 \in S_{\Pi(\mu,\nu)}$.

Next, we consider the case when $0 \not\in S_{\Pi(\mu,\nu)}$. We note that the left hand side of \eqref{eqn: inf sup swab} is $\mathrm{dist}(S_{\Pi(\mu,\nu)},0)$. Since we always have that $\inf \sup$ is greater than or equal to $\sup \inf$, it is enough show that the right hand side of \eqref{eqn: inf sup swab} is bigger than or equal to $\mathrm{dist}(S_{\Pi(\mu,\nu)},0)$. Let $d = \mathrm{dist}(S_{\Pi(\mu,\nu)},0)$. Then we observe that $B^{L^2(\omega)}_d(0) \cap S_{\Pi(\mu,\nu)} = \emptyset$. By Hahn-Banach Theorem, there exists a linear functional $\lambda$ such that
\begin{equation*}
\inf_{g \in S_{\Pi(\mu,\nu)}} \lambda(g) \geq \sup_{f \in B^{L^2(\omega)}_d(0)}\lambda(f).
\end{equation*}
Using Riesz representation Theorem and abbreviating the notation, we write $\lambda(g) = \langle \lambda , g \rangle$ with $\lambda \in L^2(\omega)$. Multiplying a constant if needed, we can assume that $\| \lambda \|_{L^2} = 1$. Then
\begin{align*}
\sup_{f \in B^{L^2(\omega)}_1(0)} \inf_{g \in S_{\Pi(\mu,\nu)}} \langle f,g \rangle & \geq \inf_{g \in S_{\Pi(\mu,\nu)}} \langle \lambda , g \rangle \\
& \geq \sup_{f \in B_d(0)} \langle \lambda, f \rangle \\
& \geq \sup_{r < d} \langle \lambda, r\lambda \rangle = d.
\end{align*}
Therefore, the right hand side of \eqref{eqn: inf sup swab} is bigger than or equal to $d = \mathrm{dist}(S_{\Pi(\mu,\nu)}, 0)$ which is equal to the left hand side of \eqref{eqn: inf sup swab}. Hence we obtain \eqref{eqn: inf sup swab}.
\end{proof}

Lemma \ref{lem: inf sup swab} with the discussion above the lemma shows the following duality formula.

\begin{Thm}\label{thm: duality}
Suppose $c$ is continuously squared cost function with square root $S$. Then
\begin{equation}\label{eqn: duality}
\inf_{\Pi(\mu,\nu)} \sqrt{\iint c d\pi d\pi} = \sup_{\substack{\| f \|_{L^2}\leq 1 \\ (\phi, \psi) \in \Phi_{S^f}}} \left(\int \phi d\mu + \int \psi d\nu \right).
\end{equation}
\end{Thm}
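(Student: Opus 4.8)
The proof is essentially the bookkeeping assembly of the chain of identities set up in the paragraphs preceding the statement, with Lemma \ref{lem: inf sup swab} supplying the one substantive step. First I would invoke the lemma that continuously squared costs are positive semi-definite to record $\iint c\,d\pi\,d\pi = \|S_\pi\|_{L^2(\omega)}^2 \geq 0$ for every $\pi \in \Pi(\mu,\nu)$; since $Z$ is compact and $c$ continuous, $c$ is bounded, so this quantity is finite and $S_\pi \in L^2(\omega)$. Non-negativity lets me commute the infimum with the square root, $\inf_{\Pi(\mu,\nu)} \sqrt{\iint c\,d\pi\,d\pi} = \inf_{\Pi(\mu,\nu)} \|S_\pi\|_{L^2}$, and the dual characterization of the $L^2$-norm, $\|S_\pi\|_{L^2} = \sup_{\|f\|_{L^2}\leq 1} \langle S_\pi, f\rangle_{L^2}$, reproduces the left-hand side of \eqref{eqn: inf sup swab}, i.e. \eqref{eqn: using L2 dual}.

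Second, I would apply Lemma \ref{lem: inf sup swab} to exchange $\inf_{\Pi(\mu,\nu)}$ and $\sup_{\|f\|_{L^2}\leq 1}$, arriving at $\sup_{\|f\|_{L^2}\leq 1}\inf_{\Pi(\mu,\nu)} \langle S_\pi, f\rangle_{L^2}$. Fixing $f$ in the unit ball, I unfold the pairing and interchange the order of integration to get $\langle S_\pi, f\rangle_{L^2} = \int\!\big(\int S(x,y,w) f(w)\,d\omega(w)\big)\,d\pi(x,y) = \int S^f(x,y)\,d\pi(x,y)$. The interchange is Fubini--Tonelli: $(z,w)\mapsto S(z,w)f(w)$ is $\pi\otimes\omega$-integrable since $S$ is bounded on $Z \times W$ (continuity together with compactness of $Z$); if $\omega$ is finite this is all that is needed, otherwise one restricts the dual representation to $f \in L^1(\omega)\cap L^2(\omega)$, which is dense in the unit ball. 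One should also note that $S^f$ is continuous on $Z$ by dominated convergence, hence a legitimate cost for the classical problem.

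Third, for each fixed $f$ the inner minimization $\inf_{\Pi(\mu,\nu)} \int S^f\,d\pi$ is precisely the classical optimal transportation Problem \ref{prbm: OT} with cost $S^f$, so Kantorovich duality \eqref{eqn: Kantorovich duality} turns it into $\sup_{(\phi,\psi)\in\Phi_{S^f}}\big(\int\phi\,d\mu + \int\psi\,d\nu\big)$. Substituting back and merging the outer $\sup$ over $f$ with this $\sup$ into a single supremum over pairs $(f,(\phi,\psi))$ with $\|f\|_{L^2}\leq 1$ and $(\phi,\psi)\in\Phi_{S^f}$ yields \eqref{eqn: duality}. The only genuinely nontrivial ingredient is the $\inf$--$\sup$ swap, which Lemma \ref{lem: inf sup swab} already delivers through a Hahn--Banach separation; the remaining care points are the Fubini step (integrability of $S$ against $\omega$) and the continuity of $S^f$, both routine under mild hypotheses on $(W,\omega)$ and $S$. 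I expect the Fubini/regularity check on $S^f$ to be the most delicate piece, since it is where the unspecified generality of the ``metric measure space'' $(W,d_W,\omega)$ has to be tamed.
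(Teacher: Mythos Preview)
Your proposal is correct and follows essentially the same route as the paper: the paper's proof consists precisely of the discussion preceding Lemma~\ref{lem: inf sup swab} (rewrite $\iint c\,d\pi\,d\pi$ as $\|S_\pi\|_{L^2}^2$, use the $L^2$ dual characterization, swap $\inf$/$\sup$ via the lemma, unfold the pairing into $\int S^f\,d\pi$, and apply classical Kantorovich duality), with the theorem stated as an immediate consequence. If anything, you are more scrupulous than the paper about the Fubini justification and the continuity of $S^f$; the paper treats these as formal and never addresses the generality of $(W,d_W,\omega)$ that you rightly flag.
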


The equation \eqref{eqn: duality} shows that if the cost function is squared in the sense of Definition \ref{def: squared cost}, then indeed the QOT can be considered as a square of a certain classical optimal transportation problem. Since there is an extra function $f \in L^2(\omega)$ to consider, however, the proof for the existence of a solution to the dual problem will not be carried directly from the classical optimal transportation case.

\begin{Prop}\label{prop: existence of dual sol}
Suppose $c$ is a uniformly continuously squared cost function. Then solutions to the dual problem \eqref{eqn: duality} exist.
\end{Prop}
\begin{proof}
Since $c$ is uniformly continuously squared cost, the square root $S$ is uniformly continuous, i.e. for any $\epsilon>0$, there exists $\delta>0$ such that if $ d_{Z\times W}((z_0,w_0),(z_1,w_1)) < \delta$, then $|S(z_1,w_1)-S(z_0,w_0)|<\epsilon$. Then we observe that for any $z_0, z_1 \in Z$ such that $d_Z(z_0,z_1)< \delta$,
\begin{align*}
|S^f(z_1) - S^f(z_0)| & = \left| \int (S(z_1,w)-S(z_0,w))f(w)d\omega(w) \right| \\
& \leq \left(\int (S(z_1,w)-S(z_0,w)) d\omega(w) \right)^{\frac{1}{2}} \left( \int f^2(w) d\omega(w) \right) ^{\frac{1}{2}} \\
& \leq \epsilon.
\end{align*} 
For any $f \in \overline{B^{L^2(\omega)}_1(0)}$. Hence the family of functions $\{ S^f | f \in \overline{B^{L^2(\omega)}_1(0)} \}$ is equi-uniformly continuous. Now Fix any $f \in \overline{B^{L^2(\omega)}_1(0)}$. We show that the supremum
\begin{equation}\label{eqn: dual sup f fixed}
\sup_{(\phi,\psi)\in\Phi_{S^f}} \left( \int \phi d\mu + \int \psi d\nu \right)
\end{equation}
is achieved by a pair of functions $(\phi_f,\psi_f) \in \Phi_{S^f}$. Let $(\phi_k, \psi_k) \in \Phi_{S^f}$, $k \in \N$ be a maximizing sequence. Then, we define
\begin{align*}
\psi^*_k(y) & := \inf_{x \in X} \{ S^f(x,y) - \phi_k(x)\}, \\
\phi^*_k(x) & := \inf_{y \in Y} \{ S^f(x,y) - \psi^*_k(y)\}.
\end{align*}
Note that $(\phi_k,\psi_k) \in \Phi_{S^f}$ implies $\psi_k(y) \leq \psi_k^*(y)$, $\phi(x) \leq \phi_k^*(x)$ and $(\phi_k^*, \psi_k^*) \in \Phi_{S^f}$. 
We show that the sequence $\phi^*_k$ and $\psi^*_k$ are equi-continuous. Let $\epsilon>0$ be arbitrary and let $\delta>0$ be a positive number such that if $|x_1 -x_0| < \delta$ then $|S^f(x_1,y)-S^f(x_0,y)| < \epsilon/2$. Also, there exists $y_{k,\epsilon}\in Y$ such that
\begin{equation*}
| \psi_k^*(x_0) - (S^f(x_0,y_{k,\epsilon})-\psi_k(y_{k,\epsilon}))| < \frac{\epsilon}{2}.
\end{equation*}
Then we observe
\begin{align*}
\psi_k^*(x_1) -\psi_k^*(x_0) & = \inf \{ S^f (x_1,y) - \psi_k(y) \} - \inf \{ S^f (x_0,y) - \psi_k(y) \} \\
& < \inf \{ S^f (x_1,y) - \psi_k(y) \} -( S^f(x_0,y_{k,\epsilon}) - \psi_k(y_{k,\epsilon}) ) + \frac{\epsilon}{2} \\
& \leq (S^f(x_1, y_{k,\epsilon}) - \psi_k(y_{k,\epsilon})) -( S^f(x_0,y_{k,\epsilon}) - \psi_k(y_{k,\epsilon}) ) + \frac{\epsilon}{2} \\
& = S^f(x_1, y_{k,\epsilon} ) - S^f(x_0, y_{k,\epsilon}) + \frac{\epsilon}{2} \\
& < \epsilon.
\end{align*}
$\psi_k^*(x_0) - \psi_k^*(x_1) < \epsilon$ can also be proved similarly using the same $\delta$. This implies that for any $\epsilon>0$ and $k\in \N$, there exists $\delta$ such that if $| x_1 - x_0| < \delta$ then
\begin{equation*}
|\psi_k^*(x_1) - \psi_k^*(x_0)| < \epsilon.
\end{equation*}
Hence the sequence $\{ \psi_k^* \}$ is equi-continuous. Equi-continuity of the sequence $\{ \phi_k^* \}$ can also be proved similarly using the same $\delta$, and we obtain that the sequences $\{\phi_k^*\}$ and $\{\psi_k^*\}$ are equi-continuous. 

Now we use Arzela-Ascoli to obtain that, up to subsequences, $\{ \phi_k^* \}$ and $\{ \psi_k^* \}$ uniformly converges to $\phi_f$ and $\psi_f$ respectively. Since $(\phi_k^*, \psi_k^*)$ was a maximizing sequence, we obtain
\begin{equation*}
\sup \left( \int \phi d\mu + \int \psi d\nu \right) = \lim_{k \to \infty} \left( \int \phi_k^* d\mu + \int \psi_k^* d\nu \right) = \int \phi_f d\mu + \int \psi_f d\nu.
\end{equation*}
Therefore, the pair of the functions $(\phi_f, \psi_f)$ achieves the supremum \eqref{eqn: dual sup f fixed}.

Now, let $f_k \in \overline{B^{L^2(\omega)}_1(0)}$ be a maximizing sequence of the quantity $\int \phi_f d\mu + \int \psi_f d\nu$. By Banach-Alouglu, $\overline{B^{L^2(\omega)}_1(0)}$, the closed unit ball in the Hilbert space $L^2(\omega)$, is weakly compact. Hence, up to a subsequece, $f_k$ converges to some $f_0$ weakly. Also, from the construction of the pairs $(\phi_f, \psi_f)$, the family of pairs of functions $\{ (\phi_f, \psi_f) | f \in\overline{B^{L^2(\omega)}_1(0)} \}$ is equi-uniformly continuous. Hence, by Arzela-Ascoli again, up to a subsequence $(\phi_{f_k}, \psi_{f_k})$ uniformly converges to $(\phi_0, \psi_0)$. One can easily see that $(\phi_0,\psi_0) \in \Phi_{f_0}$. Then we observe that
\begin{align*}
\sup_{f \in \overline{B^{L^2(\omega)}_1(0)}} \left( \int \phi_f d\mu + \int \psi_f d\nu \right) & = \lim_{k \to \infty} \left( \int \phi_{f_k} d\mu + \int \psi_{f_k} d\nu \right) \\
& = \int \phi_0 d\mu+ \int \psi_0 d\nu \\
& \leq \int \phi_{f_0} d\mu + \int \psi_{f_0} d\nu.
\end{align*}
The first term $\sup_{f \in \overline{B^{L^2(\omega)}_1(0)}} \left( \int \phi_f d\mu + \int \psi_f d\nu \right)$ is equal to the right hand side of \eqref{eqn: duality}, $\sup_{\substack{\| f \|_{L^2}\leq 1 \\ (\phi, \psi) \in \Phi_{S^f}}} \left(\int \phi d\mu + \int \psi d\nu \right)$, i.e., we have
\begin{equation*}
\sup_{\substack{\| f \|_{L^2}\leq 1 \\ (\phi, \psi) \in \Phi_{S^f}}} \left(\int \phi d\mu + \int \psi d\nu \right) \leq \int \phi_{f_0} d\mu + \int \psi_{f_0} d\nu.
\end{equation*}
This with $(\phi_{f_0}, \psi_{f_0})\in \Phi_{f_0}$ implies that the triple $(f_0, \phi_{f_0}, \psi_{f_0})$ is a solution to the dual problem \eqref{eqn: duality}.
\end{proof}

\begin{Rmk}\label{rmk: opt f}
In the proof of Proposition \ref{prop: existence of dual sol}, we regarded the dual problem as an independent problem from QOT. However, existence of optimal $f$ in the dual problem \eqref{eqn: duality} can also be deduced from the existence of a Kantorovich solution Proposition \ref{prop: exist opt plan}. From \eqref{eqn: using L2 dual}, we observe that the optimal $f$ has to be a rescaling of $S_\pi$ for a Kantorovich solution $\pi$. 
\end{Rmk}

We do not know if the solution of QOT is unique unless we have positive definite as described in Remark \ref{rmk: positive definite cost unique sol}. The optimal function $f$ from the dual problem \eqref{eqn: duality}, however, is unique due to the strict convexity of the unit ball $\overline{B^{L^2(\omega)}_1(0)}$ in $L^2(\omega)$. 

\begin{Prop}
The optimal $f$ is unique.
\end{Prop}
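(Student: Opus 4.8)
The plan is to make Remark~\ref{rmk: opt f} precise: the optimal $f$ is forced to be the rescaled minimal-norm element of the convex set $S_{\Pi(\mu,\nu)}$, and uniqueness then falls out of the equality case of the Cauchy--Schwarz inequality, which is exactly the strict convexity of the $L^2$ unit ball alluded to in the statement. Throughout I write $K := S_{\Pi(\mu,\nu)} \subset L^2(\omega)$, which is convex and strongly compact, and $d := \inf_{\Pi(\mu,\nu)} \sqrt{\iint c\, d\pi\, d\pi} = \operatorname{dist}(K,0)$; I will carry out the argument under the assumption $d>0$ (equivalently $0 \notin K$, i.e.\ the optimal cost is strictly positive) and comment on the degenerate case at the end.

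First I would record that, $K$ being a nonempty closed convex subset of the Hilbert space $L^2(\omega)$, it has a unique element $g^\ast$ of minimal norm $\|g^\ast\|_{L^2} = d$; write $g^\ast = S_{\pi^\ast}$ for a minimizer $\pi^\ast$ of Problem~\ref{prbm: qOT}, which exists by Proposition~\ref{prop: exist opt plan}. The Hilbert-space projection characterization yields the variational inequality $\langle g - g^\ast, g^\ast \rangle \ge 0$ for all $g \in K$, i.e.\ $\langle S_\pi, g^\ast \rangle \ge d^2$ for every $\pi \in \Pi(\mu,\nu)$. Dividing by $d$ and setting $f^\ast := g^\ast/d$ (so $\|f^\ast\|_{L^2} = 1$), we get $\langle S_\pi, f^\ast \rangle \ge d$ for all $\pi$, with equality at $\pi = \pi^\ast$; hence $\inf_{\Pi(\mu,\nu)} \langle S_\pi, f^\ast \rangle = d$. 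Combined with Lemma~\ref{lem: inf sup swab} and Theorem~\ref{thm: duality}, this shows $f^\ast$ is an optimizer of the dual problem \eqref{eqn: duality}.

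For uniqueness, let $f$ be any optimizer, so $\|f\|_{L^2} \le 1$ and $\inf_{\Pi(\mu,\nu)} \langle S_\pi, f \rangle = d$. Evaluating the infimand at $\pi = \pi^\ast$ gives $d \le \langle g^\ast, f \rangle \le \|g^\ast\|_{L^2} \|f\|_{L^2} \le d$, so $\|f\|_{L^2} = 1$ and equality holds throughout the Cauchy--Schwarz chain $\langle g^\ast, f \rangle \le \|g^\ast\|_{L^2}\|f\|_{L^2}$. Since $g^\ast \ne 0 \ne f$ and the inner product is positive, equality forces $f$ to be a positive scalar multiple of $g^\ast$ — this rigidity is the strict convexity of the $L^2(\omega)$ unit ball referred to in the statement — and the normalization $\|f\|_{L^2} = 1 = \|g^\ast\|_{L^2}/d$ pins it down to $f = g^\ast/d = f^\ast$. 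Hence every optimizer equals $f^\ast$; as a by-product, $S_{\pi^\ast}$ is independent of the choice of minimizer $\pi^\ast$ of Problem~\ref{prbm: qOT}.

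The only point requiring care is the degenerate case $d = 0$, i.e.\ $0 \in S_{\Pi(\mu,\nu)}$: there every $f$ in the unit ball with $\langle S_\pi, f \rangle \ge 0$ for all $\pi$ is optimal, and that set need not be a singleton, so the statement is naturally read under the standing assumption that the optimal cost is positive (which is also the only regime in which the rescaling in Remark~\ref{rmk: opt f} makes sense). Apart from isolating this hypothesis, the argument is routine; the sole structural ingredient is the projection theorem / Cauchy--Schwarz equality case in $L^2(\omega)$, so I do not expect any real obstacle.
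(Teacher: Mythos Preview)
Your proof is correct and takes a genuinely different route from the paper's. The paper argues in two steps: first a scaling argument shows any optimal $f$ must have $\|f\|_{L^2}=1$, and then a contradiction is obtained by taking two distinct unit-norm optimizers $f_0,f_1$, noting that their midpoint $f_t$ has $\|f_t\|_{L^2}<1$ by strict convexity of the $L^2$ ball, and rescaling $f_t$ to produce a strictly larger dual value. Your argument is instead constructive: you identify the optimizer as the normalized Hilbert projection $f^\ast = g^\ast/d$ of $0$ onto the convex set $K=S_{\Pi(\mu,\nu)}$, and then force any optimizer $f$ to coincide with $f^\ast$ via the equality case of Cauchy--Schwarz applied to $\langle g^\ast,f\rangle$. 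Both proofs ultimately rest on the same strict convexity of the $L^2$ unit ball, but yours buys two extras: it exhibits the optimizer explicitly (making Remark~\ref{rmk: opt f} precise), and it shows as a by-product that $S_{\pi^\ast}$ is the same for every primal minimizer $\pi^\ast$. You also correctly isolate the hypothesis $d>0$; the paper's scaling step ``use $\tilde f = f/\|f\|_{L^2}$ to obtain a greater quantity'' tacitly needs the optimal dual value to be positive as well, so this is not a defect of your approach relative to the paper's.
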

\begin{proof}
We first show that if $f$ is optimal in the dual problem, then we must have $\| f \|_{L^2} = 1$. Indeed, if $(\phi,\psi) \in \Phi_{S^f}$, then $(k\phi, k\psi) \in \Psi_{S^{kf}}$ for any $k>0$ and therefore we can observe
\begin{equation*}
\sup_{(\phi,\psi)\in \Phi_{S^{kf}}}\left( \int\phi d\mu + \int \psi d\nu \right) = k \times \sup_{(\phi,\psi)\in \Phi_{S^f}}\left( \int \phi d\mu + \int \phi d\nu \right).
\end{equation*}
Hence if $\| f \|_{L^2} <1$, then we can use $\tilde{f} = f/\|f\|_{L^2}$ to obtain a greater quantity in the dual problem. Then the uniqueness of optimal $f$ follows from the strict convexity of the unit ball $\overline{B^{L^2(\omega)}_1(0)}$ in $L^2(\omega)$. Suppose there exist two different optimal $f$, say $f_0$ and $f_1$. Then we have $\| f_0 \|_{L^2}  =\| f_1 \|_{L^2} = 1$. Define $f_t = t f_1 + (1-t) f_0$. Then we have $\| f_t \| < 1$ for $t \in (0,1)$. On the other hand, for $(\phi_i, \psi_i) \in \Phi_{S^{f_i}}$, $i=0,1$, we can obtain
\begin{align*}
\phi_t(x) + \psi_t(y) & := ( t\phi_1 (x) + (1-t) \phi_0(x) ) + (t \psi_1(y) + (1-t)\psi_0(y)) \\
& = t ( \phi_1 (x) + \psi_1(y) ) + (1-t)(\phi_0(x) + \psi_0(y) ) \\
& \leq t \int S(x,y,w) f_1(w) d\omega(w) + (1-t) \int S(x,y,w) f_0(w) d\omega(w) \\
& = \int S(x,y,w) f_t (w) d\omega(w) = S^{f_t}(x,y),
\end{align*}
so that $(\phi_t , \psi_t) \in \Phi_{S^{f_t}}$. Therefore
\begin{align*}
&\sup_{(\phi_0 , \psi_0)\in \Phi_{S^{f_0}}} \left( \int \phi_0 d\mu + \int \psi_0 d\nu \right)+ \sup_{(\phi_1 , \psi_1)\in \Phi_{S^{f_1}}} \left( \int \phi_1 d\mu + \int \psi_1 d\nu \right)\\
\leq & \sup_{(\phi,\psi)\in \Phi_{S^{f_t}}} \left( \int \phi d\mu + \int \psi d\nu \right) \\
= & \| f_t \| \sup_{(\phi,\psi)\in \Phi_{S^{\tilde{f}_t}}} \left( \int \phi d\mu + \int \psi d\nu \right) \\
< & \sup_{(\phi,\psi)\in \Phi_{S^{\tilde{f}_t}}} \left( \int \phi d\mu + \int \psi d\nu \right).
\end{align*}
where $\tilde{f}_t = {f_t}/{\| f_t \|_{L^2}} $. Note that we have used $\| f_t \|_{L^2} < 1$ in the last inequality. This contradicts to the assumption that both $f_0$ and $f_1$ are optimal and therefore optimal $f$ must be unique.
\end{proof}

\bibliographystyle{plain}

\bibliography{QuadraticOT.bib}

\newpage

\begin{appendices}

\section{Classical optimal transportation problem}\label{sec: OT}
In this section of appendix, we review some optimal transportation theories. We will omit most of the proofs in this section unless we need the proof in the paper. We refer the proofs to \cite{villani2008optimal} when its omitted. We will use the same notations as in the body of the paper in most case, but we will use the continuous cost function $c:X\times Y \to \R$. 

Classical optimal transportation problem seeks for a map $T: X \to Y$ which minimize the total cost.

\begin{Prbm*}[OT-Monge]\customlabel{prbm: Monge OT}{(OT-Monge)}
Find a map $T:X \to Y$ which satisfies $T_\sharp \mu = \nu$ and realizes the following infimum:
\begin{equation*}
\inf_{T_\sharp \mu = \nu} \int c (x, T(x)) d\mu(x).
\end{equation*}
\end{Prbm*}

A relaxed version of Problem \ref{prbm: Monge OT} can be formed using measures instead of the map $T$.

\begin{Prbm*}[OT-Kantorovich]\customlabel{prbm: Kant OT}{(OT-Kantorovich)}
Find a measure $\pi \in \Pi(\mu,\nu)$ which realizes the following infimum:
\begin{equation*}
\inf_{\pi \in \Pi(\mu,\nu)} \int c (x,y) d\pi(x,y).
\end{equation*}
\end{Prbm*}

Problem \ref{prbm: Kant OT} always admits a solution. This can be proved with a simple application of the Prokhorov's theorem.

\begin{Lem}[Prokhorov's theorem]\label{lem: prokhorov}
A set $A$ of probability measures on $X$ is called tight if for any $\epsilon>0$, there exists a compact subset $K$ of $X$ such that for any $\rho \in A$, $\rho[X \setminus K] < \epsilon$. Then a set of probability measures is tight if and only if the set is weakly sequentially pre-compact. 
\end{Lem}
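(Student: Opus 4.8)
The plan is to prove the two implications of the equivalence separately. I would handle the implication ``weakly sequentially pre-compact $\Rightarrow$ tight'' first, since it is a soft consequence of the separability and completeness of $X$ together with the Portmanteau theorem; the reverse implication is the substantive one and I would reduce it to the compact case.

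For the easy direction, suppose $A$ is weakly sequentially pre-compact and fix $\epsilon>0$. Using separability, for each $n\in\N$ cover $X$ by balls $B(x^n_i,1/n)$, $i\ge 1$, and set $U^n_k=\bigcup_{i=1}^k B(x^n_i,1/n)$, so $U^n_k\uparrow X$. I claim there is $k_n$ with $\rho[U^n_{k_n}]>1-\epsilon 2^{-n}$ for every $\rho\in A$. If not, pick $\rho_m\in A$ with $\rho_m[U^n_m]\le 1-\epsilon 2^{-n}$, extract a weakly convergent subsequence $\rho_{m_j}\rightharpoonup\rho\in\Prob{X}$, and use openness of $U^n_k$ with Portmanteau: $\rho[U^n_k]\le\liminf_j\rho_{m_j}[U^n_k]\le 1-\epsilon 2^{-n}$ for every $k$ (the last step because $\rho_{m_j}[U^n_k]\le\rho_{m_j}[U^n_{m_j}]$ once $m_j\ge k$), whence $1=\rho[X]=\lim_k\rho[U^n_k]\le 1-\epsilon 2^{-n}$, a contradiction. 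Then $K:=\bigcap_n\overline{U^n_{k_n}}$ is closed and totally bounded, hence compact by completeness, and $\rho[X\setminus K]\le\sum_n\rho[X\setminus\overline{U^n_{k_n}}]\le\epsilon$ for all $\rho\in A$.

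For the main direction, I would fix a homeomorphic embedding $\iota:X\hookrightarrow Q:=[0,1]^{\N}$ into the Hilbert cube. Since $Q$ is compact metrizable, $C(Q)$ is separable, so the closed unit ball of $\Meas{Q}$ is weak-$*$ compact and metrizable and $\Prob{Q}$ is a closed subset of it; in particular $\Prob{Q}$ is weakly sequentially compact. Given $(\rho_m)\subset A$, push forward to $\iota_\sharp\rho_m\in\Prob{Q}$ and extract $\iota_\sharp\rho_{m_j}\rightharpoonup\sigma\in\Prob{Q}$. Tightness of $A$ gives compacts $K_\ell\subset X$ with $\sup_m\rho_m[X\setminus K_\ell]<1/\ell$; since $\iota(K_\ell)$ is closed in $Q$, Portmanteau yields $\sigma[\iota(K_\ell)]\ge\limsup_j\rho_{m_j}[K_\ell]\ge 1-1/\ell$, so $\sigma[\iota(X)]=1$ and $\sigma$ descends to $\rho:=(\iota^{-1})_\sharp(\sigma|_{\iota(X)})\in\Prob{X}$. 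To conclude $\rho_{m_j}\rightharpoonup\rho$ in $\Prob{X}$: for $f\in C_b(X)$ and $\epsilon>0$ pick $\ell$ with $1/\ell<\epsilon$, extend $f|_{K_\ell}\circ\iota^{-1}$ from the compact set $\iota(K_\ell)$ to $\tilde f\in C(Q)$ with $\|\tilde f\|_\infty\le\|f\|_\infty$ by Tietze, and compare $|\int_X f\,d\rho_{m_j}-\int_Q\tilde f\,d(\iota_\sharp\rho_{m_j})|\le 2\|f\|_\infty/\ell$ and $|\int_X f\,d\rho-\int_Q\tilde f\,d\sigma|\le 2\|f\|_\infty/\ell$, while $\int_Q\tilde f\,d(\iota_\sharp\rho_{m_j})\to\int_Q\tilde f\,d\sigma$; letting first $j\to\infty$ and then $\epsilon\to 0$ finishes.

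The main obstacle I anticipate is the bookkeeping in this last direction: verifying that the weak limit $\sigma$ on the Hilbert cube is genuinely carried by the image of $X$, and then transferring weak convergence back to $\Prob{X}$ — both steps rely on pairing tightness with the closed-set form of Portmanteau, and the transfer must invoke the Tietze extension on the compact pieces $K_\ell$ rather than on all of $X$. I would also record that in the setting actually used in this paper, where $X$ is compact, all of this collapses: tightness is automatic with $K=X$, and $\Prob{X}$ is directly weak-$*$ compact and metrizable, so the equivalence is immediate and it is only this trivial form of the lemma that is needed for the existence of Kantorovich minimizers.
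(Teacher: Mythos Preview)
The paper does not prove this lemma at all: it is stated as a classical result and immediately followed by the remark that, since $X$ and $Y$ are assumed compact, tightness is automatic with $K=X$ (resp.\ $K=Y$), so that $\Pi(\mu,\nu)$ is tight. Your proposal is therefore not competing with any argument in the paper; it is supplying a full proof where the paper simply cites the result.

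That said, your proof is correct and follows the standard route. The ``pre-compact $\Rightarrow$ tight'' direction via a diagonal contradiction against the open sets $U^n_k$ and Portmanteau is fine, and yields a closed totally bounded $K$ which is compact by completeness. The ``tight $\Rightarrow$ pre-compact'' direction via embedding into the Hilbert cube, extracting a limit in $\Prob{Q}$, using closedness of $\iota(K_\ell)$ with the closed-set Portmanteau inequality to force $\sigma[\iota(X)]=1$, and then transferring weak convergence back through Tietze extensions on the compacts $K_\ell$ is also correct; the bookkeeping you flag is handled properly. Your final paragraph is exactly the observation the paper makes and is the only form of the lemma actually used downstream.
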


\begin{Prop}
Problem \ref{prbm: Kant OT} admits a solution.
\end{Prop}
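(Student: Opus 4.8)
The plan is to apply the direct method of the calculus of variations, exploiting the compactness hypotheses throughout. First I would note that the functional $J(\pi) := \int_Z \otcost \, d\pi$ is bounded below on $\Pi(\mu,\nu)$: since $\otcost$ is continuous on the compact space $Z = X \times Y$, it attains a finite minimum there, so $J(\pi) \geq \min_Z \otcost > -\infty$ for every $\pi \in \Pi(\mu,\nu)$. Hence $m := \inf_{\pi \in \Pi(\mu,\nu)} J(\pi)$ is a finite real number (finite also because $\Pi(\mu,\nu) \ni \mu \otimes \nu \neq \emptyset$), and we may choose a minimizing sequence $(\pi_n)_{n \in \N} \subset \Pi(\mu,\nu)$ with $J(\pi_n) \to m$.

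Next I would extract a limit. Because $X$ and $Y$ are compact, every set of probability measures on $Z$ is tight (take $K = Z$ in the definition), so in particular $\{\pi_n\}_{n \in \N}$ is tight, and Prokhorov's theorem (Lemma \ref{lem: prokhorov}) yields a subsequence $\pi_{n_k} \rightharpoonup \pi_\infty$ for some $\pi_\infty \in \Prob{Z}$. I would then check that $\pi_\infty \in \Pi(\mu,\nu)$, i.e. that the marginal constraints survive the weak limit: for any $\phi \in \C(X)$ the test function $\phi \circ \Proj{X}$ is continuous and bounded on $Z$, so $\int_X \phi \, d\bigl((\Proj{X})_\sharp \pi_\infty\bigr) = \lim_k \int_X \phi \, d\bigl((\Proj{X})_\sharp \pi_{n_k}\bigr) = \int_X \phi \, d\mu$; since $\phi$ was arbitrary, $(\Proj{X})_\sharp \pi_\infty = \mu$, and symmetrically $(\Proj{Y})_\sharp \pi_\infty = \nu$. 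Finally, since $\otcost$ is itself continuous and bounded on the compact $Z$, the functional $J$ is weakly continuous, whence $J(\pi_\infty) = \lim_k J(\pi_{n_k}) = m$, so $\pi_\infty$ realizes the infimum.

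There is no serious obstacle here; the whole argument is driven by compactness of $X$ and $Y$, which makes tightness automatic and the cost bounded and continuous. The only point deserving a line of care is the passage of the marginal constraints to the weak limit (equivalently, the weak closedness of $\Pi(\mu,\nu)$), which follows from testing against $\C(X)$ and $\C(Y)$ as above. I would also remark that in a non-compact setting one would replace weak continuity of $J$ by weak lower semicontinuity, obtained by writing $\otcost$ as an increasing limit of bounded continuous functions; but under the stated hypotheses this refinement is unnecessary.
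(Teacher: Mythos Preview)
Your proposal is correct and follows essentially the same approach as the paper: take a minimizing sequence, extract a weak limit via Prokhorov, verify the marginals pass to the limit, and use continuity of $\otcost$ to conclude. Your verification of the marginal constraints by testing against $\C(X)$ and $\C(Y)$ is in fact cleaner than the paper's version, which writes $\pi_\infty[A\times Y]=\lim_k \pi_k[A\times Y]$ as though weak convergence implied convergence on all Borel sets.
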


If $T$ is a Monge solution to the classical optimal transportation problem, then restriction of $T$ is also a Monge solution with corresponding marginal conditions.

\begin{Prop}\label{prop: ot local}
Suppose $\mu'$ is a probability measure on $X$ such that $\mu' \ll \mu$ and let $\nu' = T_\sharp \mu'$. Then $T$ is a solution to Problem \ref{prbm: Monge OT} with source and target measures $\mu'$ and $\nu'$.
\end{Prop}

The solutions of Problem \ref{prbm: Kant OT} satisfy a special monotone structure called $c$-cyclical monotonicity.

\begin{Def}\label{def: cyclical monotone}
A set $A \subset Z$ is called $c$-cyclically monotone if, for any $(x_i, y_i) \in A$, $i=1, \cdots, n$, we have
\begin{equation*}
\sum_{i=1}^n c(x_i , y_i ) \leq \sum_{i=1}^n c (x_i, y_{i+1})
\end{equation*}
with convention $y_{n+1} = y_1$. A probability measure $\pi \in \Prob{Z}$ is called $c$-cyclically monotone if $\spt{\pi}$ is $c$-cyclically monotone.
\end{Def}

\begin{Lem}\label{lem: cyclical monotone OT solution}
Solutions of Problem \ref{prbm: Kant OT} are $c$-cyclically monotone.
\end{Lem}
\begin{proof}
Suppose there is a solution $\pi$ of Problem \ref{prbm: Kant OT} that is not $c$-cyclically monotone. Then there exist $n \in \N$ and $(x_i, y_i) \in \spt{\pi}$, $1 \leq i \leq n$ such that
\begin{equation*}
\sum_{i=1}^n c(x_i , y_i ) > \sum_{i=1}^n c (x_i, y_{i+1}).
\end{equation*}
Since $c$ is continuous, there exist $r>0$ such that 
\begin{equation}\label{eqn: non cyclic ineq}
\sum_{i=1}^n c(\xbar_i , \ybar_i ) > \sum_{i=1}^n c (\xbar_i, \ybar_{i+1})
\end{equation}
for any $(\xbar_i, \ybar_i) \in B_r^X(x_i)\times B_r^Y(y_i)$. Also, as $(x_i, y_i) \spt{\pi}$, we have $\pi[B_r^X(x_i)\times B_r^Y(y_i)] >0$. Let $\epsilon$ be a small positive number such that $\epsilon < \min_{1 \leq i \leq n} \pi[B_r^X(x_i)\times B_r^Y(y_i)]$ We define probability measures $\pi_i$ by
\begin{equation*}
\pi_i = \frac{1}{\pi[B_r^X(x_i)\times B_r^Y(y_i)]} \pi \lfloor_{B_r^X(x_i)\times B_r^Y(y_i)},
\end{equation*}
and a measure $\pi_-$ by
\begin{equation*}
\pi_- = \epsilon \sum_{i=1}^n \pi_i.
\end{equation*}
Let $\gamma = \pi_1 \otimes \cdots \otimes \pi_n$. Then $\gamma$ is a probability measure in $Z^n$ and $\pi_i  = \Proj{i}_\sharp \gamma$ where $\Proj{i} : Z^n \to Z$ is the projection to $i$-th coordinate. Since $Z = X \times Y$, we have $\Proj{i} = (U_i, V_i)$ for some $U_i: Z^n \to Z$ and $V_i : Z^n \to Z$. We define $\tilde{\pi}_i$ by 
\begin{equation*}
 \tilde{\pi}_i = (U_i, V_{i+1})_\sharp \gamma
\end{equation*}
and a measure $\pi_+$ by
\begin{equation*}
\pi_+ = \epsilon \sum_{i=1}^n \tilde{\pi}_i.
\end{equation*}
Finally, define 
\begin{equation*}
\tilde{\pi} = \pi - \pi_- + \pi_+.
\end{equation*}
From the choice of $\epsilon$ and construction of $\pi_-$ and $\pi_+$, $\tilde{\pi}$ is a non-negative measure. Also, $\pi_-$ and $\pi_+$ have both $X$-marginal and $Y$-marginal the same respectively. In particular, $\tilde{\pi}$ is a probability measure with the same marginal condition with $\pi$, and hence $\tilde{\pi} \in \Pi(\mu,\nu)$. Then we compute
\begin{align*}
& \int c d\tilde{\pi} \\
& = \int c d\pi - \epsilon \int c d \pi_- + \epsilon \int c d \pi_+ \\
&  = \int c d\pi - \epsilon \left( \sum_{i=1}^n \int_{B_r^X(x_i)\times B_r^Y(y_i)} c d \pi_i - \sum_{i=1}^n \int_{B_r^X(x_i)\times B_r^Y(y_{i+1})} c d \tilde{\pi}_i\right) \\
& = \int c d\pi - \epsilon \left( \int \sum_{i=1}^n \left(c (U_i(w), V_i(w)) - c(U_i(w),V_{i+1}(w)) \right) d\gamma \right) \\
&< \int c d \pi
\end{align*}
where we have used that $U_i(\spt{\gamma}) \subset B_r^X(x_i)$, and $V_i(\spt{\gamma}) \subset B_r^Y{y_i}$ in the second and third equality and \eqref{eqn: non cyclic ineq} in the last inequality. This contradicts to the assumetion that $\pi$ gives the infimum. Therefore $\pi$ must be $c$-cyclically monotone.
\end{proof}

\begin{Prop}\label{prop: ot stability}
Let $c_n : X \times Y \to \R$ be a sequence of continuous functions that converges uniformly to $c_0$. Let $\mu_n$ and $\nu_n$ be sequences of probability measures on $X$ and $Y$ respectively, and let $\pi_n$ be a Kantorovich solution for Problem \ref{prbm: Kant OT} with marginals $\mu_n$ and $\nu_n$. Suppose that the sequences $\mu_n$ and $\nu_n$ converges weakly to $\mu_0$ and $\nu_0$ respectively. Then, up to a subsequence, $\pi_n$ converges weakly to $\pi_0 \in \Pi(\mu_0,\nu_0)$ which is a Kantorovich solution to the Problem \ref{prbm: Kant OT} with marginals $\mu_0$ and $\nu_0$.
\end{Prop}

$c$-cyclical monotonicity of solutions for the Problem \ref{prbm: Kant OT} demonstrates very important structure of the solutions. If the cost function was the negative inner product in the Euclidean space $c(x,y) = - \langle x, y \rangle$, then $c$-cyclical monotonicity tells that the solution $\pi$ is supported in a monotone set which is given by a graph of superdifferential of a concave function. In general case, the concavity can be replaced with $c$-concavity.

\begin{Def}
A function $\phi : X \to \R$ is called $c$-concave if 
\begin{equation}\label{eqn: c-concave func}
\phi(x) := \inf_{y \in Y}\{ c(x,y) - \psi(y) \}
\end{equation}
for some function $\psi: Y \to \R$. The formula on the right hand side of \eqref{eqn: c-concave func} is called a $c$-transform of $\phi$. For fixed $x_0 \in X$, we define the $c$-superdifferential $\partial^{c}\phi(x_0) \subset Y$ by 
\begin{equation*}
y_0 \in \partial^{c}\phi(x_0) \subset Y \Leftrightarrow \phi(x) \leq c(x,y_0) - c(x_0,y_0) + \phi(x_0)
\end{equation*}
for any $x \in X$. For $A \subset X$, we define $\partial^{c}\phi(A) \subset X \times Y$ by 
\begin{equation*}
\partial^{c}\phi(A) := \{ (x,y) | y \in \partial^{c}\phi(x) \}.
\end{equation*}
\end{Def}

The $c$-cyclical monotonicity of $\pi$ implies that $\spt{\pi}$ lies in a superdifferential of a $c$-concave function. Indeed, if we define
\begin{align*}
 \phi(x) := & \inf_{m \in \N} \inf \left\{ [c (x_1,y_0) - c(x_0,y_0)] + [c(x_2, y_1) - c(x_1,y_1)] + \right.\\
& \left. \cdots + [ c(x,y_m) - c(x_m,y_m)] | (x_i,y_i) \in \spt{\pi}, 1 \leq i \leq m \right\},
\end{align*}
then $\phi$ is a $c$-concave function whose superdifferential $\partial^c \phi(X)$ contains $\spt{\pi}$. For detail, see Rigorous proof of Theorem 5.10, part (i), Step 3 from \cite{villani2008optimal}.

$c$-superdifferential has a nice characterization using $c$-transform. We define $c^*$-transform like \eqref{eqn: c-concave func}.
\begin{equation}\label{eqn: c-transform}
\inf_{x \in X} \{ c(x,y) - \phi(x) \}.
\end{equation}
Then $y \in \partial^{c}\phi(x)$ if and only if 
\begin{equation*}
\phi(x) + \phi^*(y) = c(x,y)
\end{equation*}
where $\phi^*$ is the $c^*$-transform of $\phi$. With this characterization, we can show the following theorem called Kantorovich duality.

\begin{Lem}[Kantorovich Duality]
\begin{equation*}
\inf_{\pi \in \Pi(\mu,\nu)} \int c(x,y) d\pi = \sup_{(\phi,\psi) \in \Phi_{c}} \left( \int \phi d\mu + \int \psi d\nu \right)
\end{equation*}
where 
\begin{equation*}
\Phi_{c} = \{ (\phi,\psi)\in L^1(\mu) \times L^1(\nu)| \phi(x) + \psi(y) \leq c(x,y)\}.
\end{equation*}
\end{Lem}

Kantorovich Duality with characterization of $c$-cyclical monotone sets using $c$-concave functions provides us a very important observation.

\begin{Lem}\label{lem: c-cyclical mono characterize solution}
A measure $\pi \in \Pi(\mu,\nu)$ is a solution to Problem \ref{prbm: Kant OT} if and only if $\pi$ is $c$-cyclically monotone. 
\end{Lem}

\section{Proof of Lemma \ref{Lem: OT ex lem}}\label{sec: proof lem}
\begin{proof}
We define a new coordinate
\begin{equation*}
(u,\ubar) = (x+\xbar, x-\xbar), (v,\vbar) = (y+\ybar, y-\ybar),
\end{equation*}
and define $\Proj{u} (u,\ubar) = u$ and $\Proj{v} (v,\vbar) = v$. Also, we denote
\begin{align*}
\mu_u = u_\sharp (\mu \otimes \mu),\ & \mu_{u,\ubar} = (u,\ubar)_\sharp (\mu \otimes \mu),\\
\nu_v = v_\sharp (\nu \otimes \nu),\ & \nu_{v,\vbar} = (v,\vbar)_\sharp (\nu \otimes \nu).
\end{align*}
Since $\mu$ and $\nu$ are absolutely continuous with respect to the Lebesgue measures, we have
\begin{align*}
\mu_{u,\ubar} = f(u,\ubar) du d\ubar, \ & \mu_u = \left(\int_\R f(u,\ubar) d\ubar\right) du \\ \nu_{v,\vbar} = g(v,\vbar)dvd\vbar, \ & \nu_v = \left(\int_\R g(v,\vbar) d\vbar \right) dv
\end{align*}
Then for any $\pi' \in \Pi(\mu \otimes \mu, \nu \otimes \nu)$, we have
\begin{align*}
\int -(x+\xbar)(y+\ybar) d\pi'(x,\xbar,y,\ybar) & = \int - uv d \pi'_{u,\ubar,v,\vbar}(u,\ubar,v,\vbar)\\
& = \int -uv d (\Proj{u},\Proj{v})_\sharp \pi'_{u,\ubar,v,\vbar}(u,v).
\end{align*}
where $\pi'_{u,\ubar,v,\vbar} = (u,\ubar,v,\vbar)_\sharp \pi' \in \Pi(\mu_{u,\ubar},\nu_{v,\vbar})$. Note that $(\Proj{u},\Proj{v})_\sharp \pi'_{u,\ubar,v,\vbar} \in \Pi(\mu_u,\nu_v)$. We claim that if $\pi' \in \Pi(\mu \otimes \mu, \nu \otimes \nu)$ is a minimizer of Problem \ref{prbm: OT2}, then $(\Proj{u},\Proj{v})_\sharp \pi'_{u,\ubar,v,\vbar}$ is a minimizer of the following problem:
\begin{equation}\label{eqn: new coord OT}
\inf \int -uv d \gamma(u,v), \ \gamma \in \Pi(\mu_u, \nu_v).
\end{equation}
To observe this, we construct a right inverse map of $(\Proj{u},\Proj{v})_\sharp :\Pi(\mu_{u,\ubar}, \nu_{v,\vbar}) \to \Pi(\mu_u,\nu_v)$. For any $\gamma \in \Pi(\mu_u, \nu_v)$, we define $\Phi(\gamma)$ by
\begin{equation*}
\Phi(\gamma) [A\times B \times C \times D] = \int_{A \times C} \frac{\int_B f(u,\ubar)d\ubar}{\int_{\R} f(u,\ubar) d\ubar} \frac{\int_D g(v,\vbar) d\vbar}{\int_{\R} g(v,\vbar)d\vbar}d\gamma(u,v),
\end{equation*}
where $A,B,C,D\subset \R$ are Borel subsets. Then we have
\begin{align*}
\Phi(\gamma)[A\times B \times \R^2] & = \int_{A\times \R} \frac{\int_B f(u,\ubar)d\ubar}{\int_{\R} f(u,\ubar) d\ubar} d\gamma(u,v) \\
& = \int_A \frac{\int_B f(u,\ubar)d\ubar}{\int_{\R} f(u,\ubar) d\ubar} d\mu_u(u) \\
& = \int_A \frac{\int_B f(u,\ubar)d\ubar}{\int_{\R} f(u,\ubar) d\ubar} \left(\int_\R f(u,\ubar) d\ubar\right) du \\
& = \int_{A\times B} f(u,\ubar) d\ubar du \\
& = \mu_{u,\ubar}[A \times B],
\end{align*}
and similarly $\Phi(\gamma)[\R^2 \times C \times D] = \nu_{v,\vbar}[C \times D]$. Therefore we obtain $\Phi(\gamma) \in \Pi(\mu_{u,\ubar}, \nu_{v,\vbar})$. Moreover, we have
\begin{align*}
(\Proj{u},\Proj{v})_\sharp \Phi(\gamma) [A \times C] & = \Phi[(\Proj{u},\Proj{v})^{-1}(A \times C)] \\
& = \Phi[A \times \R \times C \times \R] \\
& = \int_{A \times C} \frac{\int_\R f(u,\ubar)d\ubar}{\int_{\R} f(u,\ubar) d\ubar} \frac{\int_\R g(v,\vbar) d\vbar}{\int_{\R} g(v,\vbar)d\vbar}d\gamma(u,v) \\
& = \int_{A\times C} d\gamma(u,v) \\
& = \gamma[A \times C],
\end{align*}
which shows that $\Phi$ is a right inverse of $(\Proj{u},\Proj{v})_\sharp$. Hence, if $\pi'$ is a minimizer of Problem \ref{prbm: OT2}, we have that for any $\gamma \in \Pi(\mu_u, \nu_v)$,
\begin{align*}
\int -uv d\gamma(u,v) & = \int -uv (\Proj{u},\Proj{v})_\sharp \Phi(\gamma)(u,v) \\
& = \int -uv d \Phi(\gamma)(u,\ubar,v,\vbar) \\
& \geq \int -uv d\pi'_{u,\ubar,v,\vbar}(u,\ubar,v,\vbar) \\
& = \int - uv d(\Proj{u},\Proj{v})_\sharp \pi'_{u,\ubar,v,\vbar}(u,v)
\end{align*}
where we have used that $\pi'_{u,\ubar,v,\vbar}$ is also a minimizer in the corresponding coordinate for the inequality in the third line, and $u = \Proj{u}(u,\ubar)$, $v= \Proj{v}(v,\vbar)$ in the last equality. This shows that $(\Proj{u},\Proj{v})_\sharp \pi'_{u,\ubar,v,\vbar}$ is also a minimizer of \eqref{eqn: new coord OT}. In particular, the support of $(\Proj{u},\Proj{v})_\sharp \pi'_{u,\ubar,v,\vbar}$ is a monotone set as the cost function is $-uv$. On the other hand, as $\mu_u$ and $\nu_v$ are absolutely continuous with respect to the Lebesgue measure, there exists an optimal map $T: \R \to \R$ such that $T$ is injective and $(\Proj{u},\Proj{v})_\sharp \pi'_{u,\ubar,v,\vbar}$ is supported in the graph of $T$ \cite{Figalli2013holder}. The monotonicity of the support together with the injectivity of $T$ implies that if $(k,l) \in \spt{(\Proj{u},\Proj{v})_\sharp \pi'_{u,\ubar,v,\vbar}}$, then
\begin{align*}
\spt{(\Proj{u},\Proj{v})_\sharp \pi'_{u,\ubar,v,\vbar}} \cap \{u \leq k \} & = \spt{(\Proj{u},\Proj{v})_\sharp \pi'_{u,\ubar,v,\vbar}} \cap \{ v \leq l \}, \\
\spt{(\Proj{u},\Proj{v})_\sharp \pi'_{u,\ubar,v,\vbar}} \cap \{u \geq k \} & = \spt{(\Proj{u},\Proj{v})_\sharp \pi'_{u,\ubar,v,\vbar}} \cap \{ v \geq l \}.
\end{align*}
These imply
\begin{align*}
\int_{\overline{D}_k} d\mu \otimes \mu & = \int_{ \{u \leq k \} } d\mu_u 
 = \int_{\{u \leq k \}} d (\Proj{u},\Proj{v})_\sharp \pi'_{u,\ubar,v,\vbar} \\
& = \int_{v \leq l} d (\Proj{u},\Proj{v})_\sharp \pi'_{u,\ubar,v,\vbar} 
 = \int_{v \leq l} d  \nu_v 
 = \int_{\overline{D}_l} d\nu \otimes \nu,
\end{align*}
and similarly $\int_{\underline{D}_k} d \mu \otimes \mu = \int_{\underline{D}_l} d\nu \otimes \nu$.
\end{proof}

\section{Formal derivation of the PDE}
In the first appendix, we derive the Monge-Amp\`ere type equation formally form the quadratic optimal transportation problem with a squared cost function $c(z,\zbar) = \int S(z,w)S(\zbar,w) d\omega(w)$. We assume that $X$ and $Y$ are compact subsets of $\R^n$ and $\mu$ and $\nu$ are absolutely continuous with respect to the Lebesgue measure. We abuse the notation $\mu(x)$ and $\nu(y)$ to denote the density functions of the probability measures $\mu$ and $\nu$. Let $T: X \to Y$ be a solution to the quadratic optimal transportation problem \eqref{prbm: qOT monge}, and let $(f,\phi,\psi)$ be the optimal triple for the dual problem \eqref{eqn: duality}.

From the push-forward condition $T_\sharp \mu = \nu$ and using the change of variables formula, we obtain the Jacobian equation for $T$.
\begin{equation}\label{eqn: jacobian eq}
\nu(T(x))\det(DT) = \mu(x).
\end{equation} 
Since the triple $(f,\phi,\psi)$ is an optimal triple of \eqref{eqn: duality}, $(\phi,\psi)$ is an optimal couple of the following problem.
\begin{equation*}
\sup_{(\phi,\psi)\in \Phi_{S^f}} \left( \int \phi d\mu + \int \psi d\nu \right),
\end{equation*}
which is well-studied from the classical optimal transportation theory. WLOG, we assume that $\phi$ is a $S^f$-convex function. Then \eqref{eqn: sqrt is ot} and Kantorovich duality implies that $T$ is given by the $S^f$-subdifferential of $\phi$. In particular, the following equation holds.
\begin{equation}\label{eqn: sf subdiff}
D\phi(x) = -D_x S^f(x,T(x)).
\end{equation}
Hence, formally we write 
\begin{equation}\label{eqn: T in dphi}
T(x) = E^f(x,D\phi(x))
\end{equation}
where $E^f(x,\cdot)$ is an inverse function of $-D_x S^f(x,\cdot)$. Also, differentiating \eqref{eqn: sf subdiff}, we obtain
\begin{equation*}
D^2 \phi(x) = -D^2_{xx} S^f (x, T(x)) - D^2_{xy} S^f(x, T(x)) DT(x).
\end{equation*}
The above equation with \eqref{eqn: jacobian eq} formally implies the following equation.
\begin{equation}\label{eqn: MA with f}
\det(D^2 \phi(x) + D^2_{xx} S^f (x,T(x))) = \det(-D^2_{xy} S^f(x,T(x))) \frac{\mu(x)}{\nu(T(x))}.
\end{equation}
The next step is to replace $S^f$ with $c$. To achieve this, we recall \ref{rmk: opt f} and use optimality of $T$ and $f$ to obtain
\begin{align*}
f(w) & = \frac{S_\pi (w)}{ \| S_\pi \|_{L^2(\omega)}} = \frac{\int S(z,w) d\pi(z)}{\left( \int \left(\int S(z,w) d\pi(z) \right)^2 d\omega(w) \right)^{1/2}}. \\
& = {\int S(z,w) d\pi(z)}/{\left( \int c(z,\zbar) d\pi(z) d\pi(\zbar) \right)^{1/2}},
\end{align*}
where $\pi = (\id \times T)_\sharp \mu$. Denote $k = (\int c(z,\zbar) d\pi(z) d\pi(\zbar))^{1/2}$, then 
\begin{align*}
S^f(x,y) & = \int S(z,w) f(w) d\omega(w) = \int S(z,w) \frac{\int S(\zbar,w) d\pi(\zbar)}{k}d\omega(w) \\
& = \frac{1}{k} \int \left( \int S(z,w)S(\zbar,w)d\omega(w) \right) d\pi(\zbar) \\
& = \frac{1}{k} \int c(z,\zbar) d \pi(z) \\
& = \frac{1}{k} \int c(x,y,\xbar,T(\xbar)) d\mu(\xbar),
\end{align*}
and
\begin{align*}
D^2_{xx} S^f(x,y) & = D^2_{xx} \frac{1}{k} \int c(x,y,\xbar,T(\xbar)) d\mu(\xbar) \\
& = \frac{1}{k} \int D^2_{xx} c (x,y,\xbar, T(\xbar) ) \mu(\xbar), \\
D^2_{xy} S^f(x,y) & = D^2_{xy} \frac{1}{k} \int c(x,y,\xbar,T(\xbar)) d\mu(\xbar) \\
& = \frac{1}{k} \int D^2_{xy} c (x,y,\xbar, T(\xbar) ) \mu(\xbar).
\end{align*}
Applying this to \eqref{eqn: MA with f}, we obtain 
\begin{align*}
& \det\left(D^2 \phi + \frac{1}{k} \int D^2_{xx} c (x,T(x),\xbar, T(\xbar) ) \mu(\xbar) \right) \\
& = \det\left(-\frac{1}{k} \int D^2_{xy} c (x,T(x),\xbar, T(\xbar) ) \mu(\xbar)\right)\frac{\mu(x)}{\nu(T(x))}. \numberthis\label{eqn: MA}
\end{align*}
Using \eqref{eqn: T in dphi} here, we obtain a partial differential equation about the function $\phi$. Note that $k$ was defined to be a quantity depending on $T$, but one can consider $k$ as a constant that does not depend on $T$ since the optimality of $T$ implies another expression of $k$ that does not depend on $T$. 
\begin{equation*}
k = \inf_{\pi \in \Pi(\mu,\nu)} \left( \iint c d\pi d\pi \right)^{1/2}.
\end{equation*}
\eqref{eqn: MA} is a Monge-Amp\`ere type equation, but it is also non-local. Note that the non-locality of the equation is in fact natural as we have seen in Example \ref{Ex: non local}.

\end{appendices}
\end{document}